\documentclass[10pt,a4paper]{article}
\usepackage[latin1]{inputenc}
\usepackage{amsmath}
\usepackage{amsfonts}
\usepackage{amssymb}
\usepackage{graphicx}
\usepackage{amsthm} 	
\usepackage{tikz}
\usepackage{hyperref}
\usepackage{todonotes}
\usetikzlibrary{backgrounds}	
\usepackage[toc]{appendix}

\usepackage{graphicx,xcolor}	

\usepackage{xcolor}

\usepackage{esint}				

\usepackage[a4paper,top=2.1cm,bottom=2.60cm,left=3.8cm,right=3.8cm]{geometry} 




\allowdisplaybreaks

\newtheorem{theorem}{Theorem}[section]
\newtheorem{proposition}[theorem]{Proposition}
\newtheorem{corollary}[theorem]{Corollary}

\newtheorem{remark}[theorem]{Remark}
\newtheorem{lemma}[theorem]{Lemma}

\title{Singular behavior for a multi-parameter periodic Dirichlet problem}
\author{Matteo Dalla Riva\thanks{Dipartimento di Ingegneria, Universit\`a degli Studi di Palermo, Viale delle Scienze, Ed.~8, 90128 Palermo, Italy.} ,  Paolo Luzzini\thanks{Dipartimento di Matematica ``Tullio Levi Civita,'' Universit\`a degli Studi di Padova, Via Trieste 63, 35121 Padova, Italy.} , Paolo Musolino\thanks{Dipartimento di Scienze Molecolari e Nanosistemi, Universit\`a Ca' Foscari Venezia, Via Torino 155, 30172 Venezia Mestre, Italy}}


\date{November 21, 2022}

\begin{document}

\maketitle

\noindent
{\bf Abstract:}  We consider a Dirichlet problem for the Poisson equation in a periodically perforated domain. The geometry of the domain is controlled by two parameters: a real number $\epsilon>0$ proportional to the radius of the holes  and a map $\phi$, which models the shape of the holes. So, if  $g$ denotes the Dirichlet boundary datum and $f$ the Poisson datum, we have a solution for each quadruple $(\epsilon,\phi,g,f)$. Our aim is to study how the solution depends on $(\epsilon,\phi,g,f)$, especially when $\epsilon$ is very small  and the holes narrow to points. In contrast with previous works, we don't introduce the assumption that $f$ has zero integral on the fundamental periodicity cell. This brings in a certain singular behavior  for   $\epsilon$ close to $0$. We show that, when the dimension $n$ of the ambient space is greater than or equal to $3$, a suitable restriction of the solution can be represented with an analytic map of the quadruple $(\epsilon,\phi,g,f)$ multiplied by the factor $1/\epsilon^{n-2}$. In case of dimension $n=2$, we have to add $\log \epsilon$ times the integral of $f/2\pi$.
 \vspace{9pt}

\noindent
{\bf Keywords:}  Dirichlet problem, integral equation method, Poisson equation,
periodically perforated domain, singularly perturbed domain, real analytic continuation in Banach spaces.
\vspace{9pt}

\noindent   
{{\bf 2020 Mathematics Subject Classification:}} 35J25, 35B25, 31B10,  45A05, 47H30

\section{Introduction}\label{sec:intro}

The object of this paper is to study a singular perturbation of a Dirichlet-Poisson problem in a periodically perforated domain. The aim is to  show that the solution can be written as a combination of real analytic maps and--possibly singular but completely known--elementary functions of the perturbation parameters. The geometry of the problem is controlled by two parameters: a positive real number $\epsilon$ that determines the size of the holes and  a shape function $\phi$  that deforms the boundary of a certain reference domain $\Omega$ into the shape of the holes. To wit, the holes are shifted copies of $\epsilon\phi(\partial\Omega)$ and thus, for $\epsilon$ that tends to zero, they shrink down to points. Next, a function $g$ denotes the Dirichlet datum and a function $f$ the Poisson datum and the problem is such that we have a unique solution for each choice of the four variables  $\epsilon$, $\phi$, $g$, and $f$. So it makes sense to ask ourselves what we can say of the map that takes a quadruple $(\epsilon,\phi,g,f)$ to the corresponding solution. In particular, we want to see what happens  when $\epsilon$ is close to zero and the holes are shrinking to points.  

The interest for periodic problems for the Laplace equations is (in part) motivated by their relevance in the applications. For instance, problems of this kind  arise in the study of  effective properties of composite materials. The reader may find some examples in the works of  Ammari, Kang, and Lim \cite{AmKaLi06}, Ammari, Kang, and Touibi \cite{AmKaTo05},  Dryga\'s, Gluzman, Mityushev, and Nawalaniec \cite{DrGlMiNa20}, Gluzman, Mityushev, and Nawalaniec \cite{GlMiNa18}, and Kapanadze, Mishuris, and Pesetskaya \cite{KaMiPe15, KaMiPe15bis}.

Indeed, we have ourselves already written on this topic. In particular, the problem of this paper is very similar to those of \cite{Mu12} and \cite{Mu13}. There is a critical difference though: in \cite{Mu12} we take the Poisson datum $f$ equal to $0$ and in \cite{Mu13} the function $f$ is required to have zero integral on the periodicity cell, whereas here we abandon this assumption. As a consequence, we have to deal with a specific singular behavior that appears for $\epsilon$   close to zero: If the ambient space has dimension $n\ge 3$, the solution shows a singularity of the order of $1/\epsilon^{n-2}$, and, for $n=2$,  a $(\log\epsilon)$-singularity. Also, in  the previous works \cite{Mu12,Mu13} the number $\epsilon$ was solely responsible for the geometric deformation  of the problem, and thus only homothetic transformations of the holes were allowed. Here, instead,  the holes can change shape according to the function $\phi$  and  we  analyze the joint dependence on the set of variables $(\epsilon,\phi,g,f)$.

We now describe our problem in detail.   We fix once for all 
\begin{equation*}
n \in \mathbb{N} \setminus\{0,1\} \qquad\text{and}\qquad  \,\, q_{11},\ldots,q_{nn}\in \mathopen]0,+\infty[\,,
\end{equation*}
where $\mathbb{N}$ denotes the set of natural numbers including 0. We set
 \[
q :=
 \begin{pmatrix}
  q_{11} & 0 & \cdots & 0 \\
  0 & q_{22} & \cdots & 0 \\
  \vdots  & \vdots  & \ddots & \vdots  \\
  0 & 0 & \cdots & q_{nn} 
 \end{pmatrix}\qquad\text{and} \qquad Q := \prod_{j=1}^n \mathopen]0,q_{jj}\mathclose[ \subseteq \mathbb{R}^n.
 \]
The set $Q$ is the fundamental periodicity cell and the diagonal matrix $q$ is the periodicity matrix associated 
with the cell $Q$. 

  We consider a set $\Omega \subseteq \mathbb{R}^n$ satisfying the following assumption:
\begin{equation}\label{Omega_def}\begin{split}& \Omega \mbox{ is}\mbox{ a bounded open connected subset of } \mathbb{R}^n \mbox{ of class } C^{1,\alpha} \\
&\mbox {such that }\mathbb{R}^n \setminus \overline{\Omega} \mbox{ is connected.}
\end{split}\end{equation}
In \eqref{Omega_def}, as well as in the rest of the paper,   $\alpha$ is a fixed number in the open interval $\mathopen]0,1[$ and the symbol $\overline{\cdot}$ denotes the closure of a set.  For the definition of sets and functions of the Schauder class $C^{j,\alpha}$ ($j \in \mathbb{N}$)  we refer, e.g., to Gilbarg and
Trudinger~\cite{GiTr83}.

The boundary $\partial\Omega$ of $\Omega$  plays the role of a reference set and the boundary of the holes is obtained rescaling and shifting the image of $\partial\Omega$ under a suitable map $\phi$. The set of the functions  $\phi$ that we allow is denoted by  $\mathcal{A}_{\partial \Omega}^{1,\alpha}$  and consists of the functions of  $C^{1,\alpha}(\partial\Omega, \mathbb{R}^{n}):=(C^{1,\alpha}(\partial\Omega))^n$  that are injective and have injective differential at all points  of $\partial\Omega$. 
We can verify that $\mathcal{A}_{\partial \Omega}^{1,\alpha}$ is open 
in $ C^{1,\alpha}(\partial\Omega, \mathbb{R}^{n})$
(see, {e.g.}, Lanza de Cristoforis and Rossi \cite[Lem.~2.2, p.~197]{LaRo08}  
and \cite[Lem.~2.5, p.~143]{LaRo04}). Moreover, if $\phi \in\mathcal{A}_{\partial \Omega}^{1,\alpha}$, then the Jordan-Leray separation theorem (see, {e.g}, Deimling \cite[Thm.~5.2, p.~26]{De85}) ensures that  $\phi(\partial \Omega)$ splits 
$\mathbb{R}^n$ into exactly two open connected components. We denote  by
$\mathbb{I}[\phi]$ the bounded one. 

 Next we fix a point
 \[
 p \in Q\, ,
 \] 
 which is the point where the hole in the  reference periodicity cell shrinks to.
It will be convenient to consider perturbations around a fixed 
\[
\phi_0 \in \mathcal{A}_{\partial\Omega}^{1,\alpha}\,.
\] 
There is no loss of generality in this choice, because $\phi_0$ can be any function of  $\mathcal{A}_{\partial\Omega}^{1,\alpha}$. The advantage is that there exist a real number $\epsilon_0>0$ and an open neighborhood $\mathcal{O}_{\phi_0}$ of $\phi_0$ in $\mathcal{A}_{\partial\Omega}^{1,\alpha}$   such that 
\begin{equation}\label{e0}
p+\epsilon \overline{\mathbb{I}[\phi]} \subseteq Q \ \ \ \ \forall (\epsilon,\phi) \in \mathopen]-\epsilon_0,\epsilon_0\mathclose[\times \mathcal{O}_{\phi_0}.\end{equation}
Then, for these $\epsilon$'s and $\phi$'s we can define the hole 
\[
\Omega_{\epsilon,\phi} := p+\epsilon {\mathbb{I}[\phi]} \ \ \ \ \forall (\epsilon,\phi) \in \mathopen]-\epsilon_0,\epsilon_0\mathclose[\times \mathcal{O}_{\phi_0}\,, 
\]
which is contained in $Q$, has size proportional to $\epsilon$, and shape determined by $\phi$. When $\epsilon$ tends to $0$, the hole shrinks toward the point $p$ while its shape changes according to $\phi$ (see Figure \ref{fig:cell}).

 \begin{figure}
\begin{center}
\includegraphics[width=4.4in]{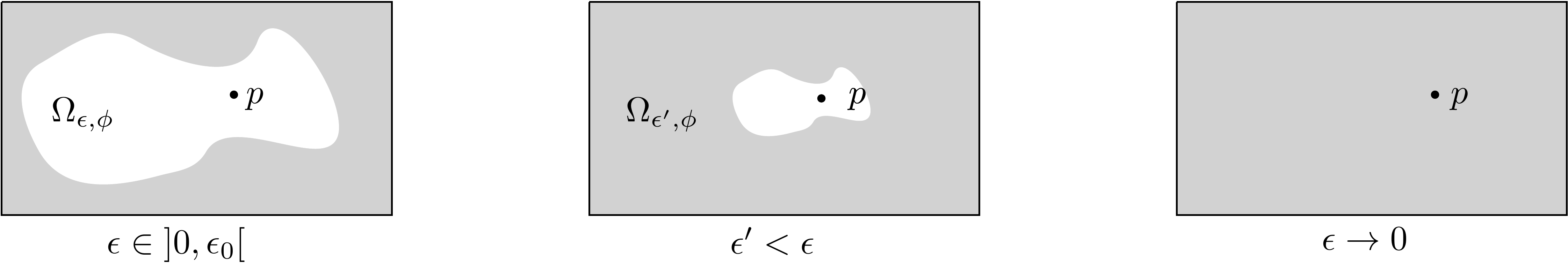}
\caption{{ A $2$-dimensional  example  of  perforated reference periodicity cell. The hole $\Omega_{\epsilon,\phi}$ shrinks toward the point $p$ when $\epsilon$ tends to $0$. }}
\label{fig:cell}
\end{center}
\end{figure}

The periodic set of holes is given by 
\[
\mathbb{S}[\Omega_{\epsilon,\phi}] := \bigcup_{z\in \mathbb{Z}^n}(qz+\Omega_{\epsilon,\phi})\,,
\]
and the periodic domain where we define the Poisson equation is 
\begin{equation*}
\mathbb{S}[\Omega_{\epsilon,\phi}]^-:= \mathbb{R}^n \setminus \overline{\mathbb{S}[\Omega_{\epsilon,\phi}]} \qquad \forall (\epsilon,\phi) \in \mathopen]-\epsilon_0,\epsilon_0\mathclose[\times \mathcal{O}_{\phi_0}\,,
\end{equation*}
that is, the domain obtained removing from $\mathbb{R}^n$ the periodic set of  holes 
$\overline{\mathbb{S}[\Omega_{\epsilon,\phi}]}$  (see Figure \ref{fig:geom-sett}). When $\epsilon$ 
 approaches zero, the hole in the cell $qz+Q$ shrinks toward $qz+p$.

\begin{figure}
\begin{center}
\includegraphics[width=3.6in]{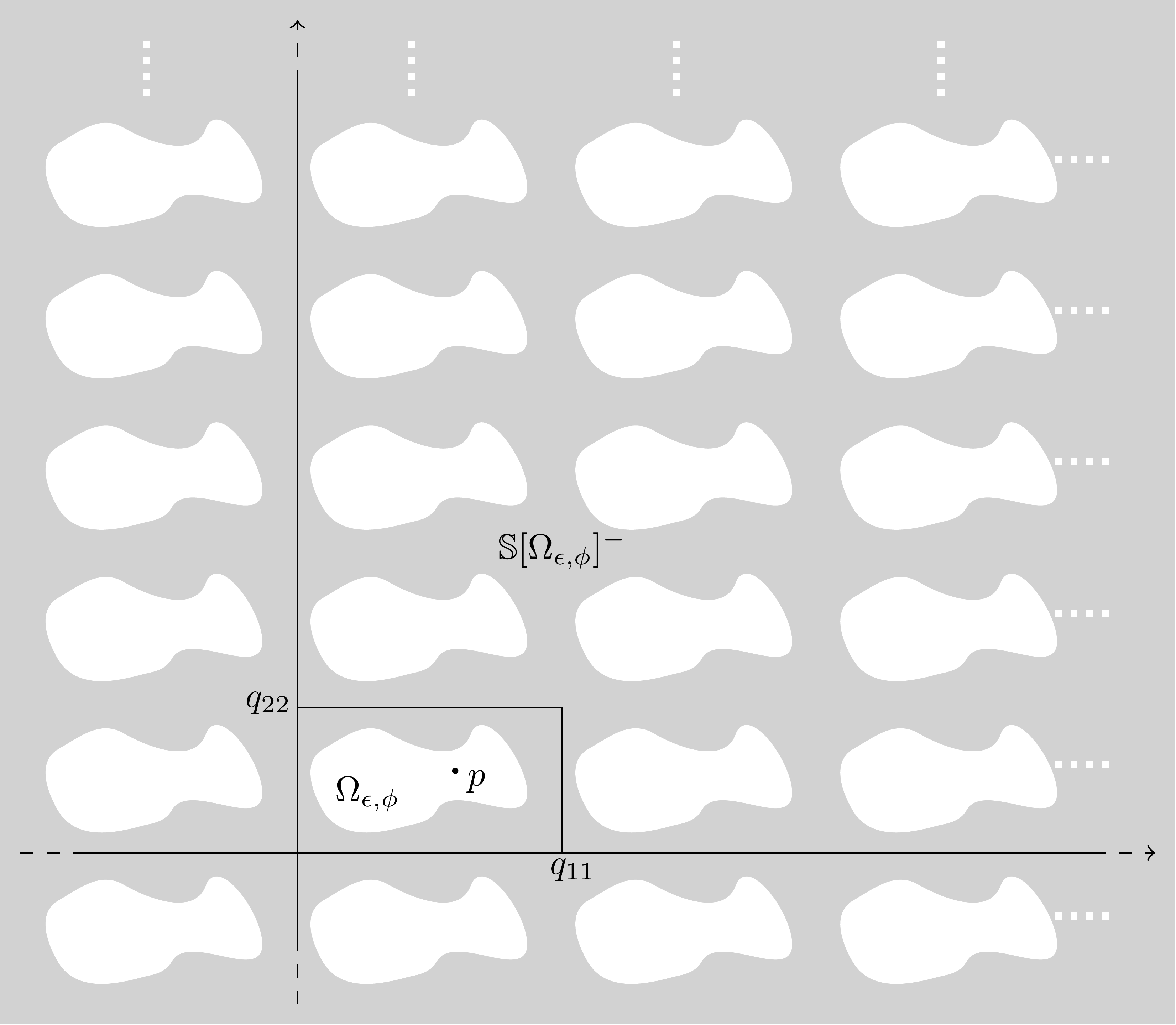}
\caption{{ A $2$-dimensional  example  of   periodically perforeted set
$\mathbb{S}[\Omega_{\epsilon,\phi}]^-$. }}
\label{fig:geom-sett}
\end{center}
\end{figure}

We now introduce suitable spaces for the functional data of the problem. For the right-hand side of the  Dirichlet boundary condition we take a function 
 \[
 g\in C^{1,\alpha}(\partial\Omega)\,,
 \]
 which we properly transplant to be defined on $\partial\Omega_{\epsilon,\phi}=p+\epsilon\phi(\partial\Omega)$.  As for the Poisson datum, regularity has to be chosen more carefully.  Lanza de Cristoforis in \cite{La05}   and Preciso in \cite{Pr98, Pr00} shown that  Roumieu   analytic functions   produce analytic composition operators and analytic Newtonian potentials, a feature that will come handy later on in our analysis. Moreover, since we are dealing with a periodic problem, we have to take periodicity into account. So,   the right-hand side of the Poisson equation will be a  function 
 \[
 f\in C^0_{q,\omega,\rho}(\mathbb{R}^n)\,,
 \] 
 where $\rho>0$ is fixed and $C^0_{q,\omega,\rho}(\mathbb{R}^n)$ denotes the Roumieu class of $q$-periodic analytic functions (see \eqref{proum} for the exact  definition of $C^0_{q,\omega,\rho}(\mathbb{R}^n)$, see also \cite{Mu13}).

 All the ingredients are now introduced and we can state our Dirichlet-Poisson problem. For a quadruple $(\epsilon,\phi,g,f) \in \mathopen]0,\epsilon_0\mathclose[\times \mathcal{O}_{\phi_0} \times C^{1,\alpha}(\partial \Omega) \times C^0_{q,\omega,\rho}(\mathbb{R}^n)$, we look at  
\begin{equation}
\label{bvpe}
\left\{
\begin{array}{ll}
\Delta u (x)=f(x) & \forall x\in \mathbb{S}[\Omega_{\epsilon,\phi}]^-,\,
\\
u(x+ q z)=u(x) &\forall x \in \overline{\mathbb{S}[\Omega_{\epsilon,\phi}]^-},\, \forall z \in \mathbb{Z}^n,
\\
u(x)= g\circ \phi^{(-1)}(\epsilon^{-1}(x- p))
& \forall x\in \partial\Omega_{\epsilon,\phi}\,.
\end{array}
\right.
\end{equation}
It is well known that problem \eqref{bvpe} has a unique solution and that such solution belongs to $C^{1,\alpha}(\overline{\mathbb{S}[\Omega_{\epsilon,\phi}]^-} )$ (see Theorem \ref{biju}, see also \cite[Prop.~2.2]{Mu13}). To emphasize the dependence  on $(\epsilon,\phi,g,f)$, we denote it by  
\[
u[\epsilon,\phi,g,f]\,.
\]

Then our goal is to describe the map 
\[
\mathopen]0,\epsilon_0\mathclose[\times \mathcal{O}_{\phi_0} \times C^{1,\alpha}(\partial \Omega) \times C^0_{q,\omega,\rho}(\mathbb{R}^n)\ni (\epsilon,\phi,g,f)\mapsto u[\epsilon,\phi,g,f]\in C^{1,\alpha}(\overline{\mathbb{S}[\Omega_{\epsilon,\phi}]^-} )\,.
\]
We observe, however, that the space in the right-hand side depends on $\epsilon$, and so it is not suited to be the codomain of a function that depends on $\epsilon$ itself. To fix this inconvenience, we take   an open bounded set $V$ compactly contained in $\mathbb{R}^n \setminus (p+q\mathbb{Z}^n)$ and 
for $\epsilon$ sufficiently small 
consider the    restriction of $u[\epsilon,\phi,g,f]$ to $\overline{V}$.  So that $u[\epsilon,\phi,g,f]_{|\overline{V}}$ belongs to $C^2(\overline{V})$, a space that does not depend on $\epsilon$. 

In our main Theorem \ref{thm:repsol} we see that,  for a possibly smaller $\epsilon_0>0$ and possibly shrinking the open neighborhood $\mathcal{O}_{\phi_0}$  of $\phi_0$, the map that takes $(\epsilon,\phi,g,f)$ to  $u[\epsilon,\phi,g,f]_{|\overline{V}}$ is described by the formula
\[
u[\epsilon,\phi,g,f]_{|\overline{V}}=\frac{1}{\epsilon^{n-2}}\mathfrak{U}[\epsilon,\phi,g,f]+\delta_{2,n}  \frac{  \log \epsilon}{2 \pi} \int_{Q}f(x)\, dx\,,
\]
where $\delta_{2,n}$ is the Kronecker delta symbol and where
\[
\mathfrak{U}:\mathopen]-\epsilon_0,\epsilon_0\mathclose[\times \mathcal{O}_{\phi_0} \times C^{1,\alpha}(\partial \Omega) \times C^0_{q,\omega,\rho}(\mathbb{R}^n)\to C^2(\overline{V})
\]
is real analytic (here $C^2(\overline{V})$ could be replaced with $C^k(\overline{V})$ for any $k>0$). The formula above makes evident that for $n=2$ the singular behavior of the solution only appears as soon as $f$ has non-zero integral over $Q$. The same is true for $n\ge 3$, as we can deduce comparing this result with those in \cite{Mu13}.   

Usually, boundary value problems in singularly perturbed domains are studied with the methods of Asymptotic Analysis, as we can find in the works of Kozlov,  Maz'ya and  Movchan \cite{KoMaMo99}, Maz'ya, Movchan, and Nieves \cite{MaMoNi13}, Mazya,  Nazarov and Plamenewskii \cite{MaNaPl00a,MaNaPl00b}, Novotny and Soko\l owski \cite{NoSo13}, and so on. In this paper we adopt a different approach, named {\it Functional Analytic Approach}, which is more suited  to obtain representation formulas in terms of analytic functions (see \cite{DaLaMu21} for a detailed introduction).  For this specific problem we will employ some periodic potential theory and we will exploit the idea of \cite{MuMi15} (later developed in \cite{LaMu16} and \cite{LuMuPu19}) of using the periodic Newtonian potential corrected with a suitable multiple of the periodic fundamental solution of the Laplace equation.  The singular behavior that arises when $f$ has non-zero integral over $Q$ is related to the fact that, in that case, the  problem
\begin{equation}\label{eq:bvpfull}
\left\{
\begin{array}{ll}
\Delta u (x)=f(x) & \forall x\in \mathbb{R}^n,\,
\\
u(x+ q z)=u(x) &\forall x \in \mathbb{R}^n,\, \forall z \in \mathbb{Z}^n,
\end{array}
\right.
\end{equation}
has no solution (as one can check applying the Divergence Theorem to $u$ in $Q$). Problem \eqref{eq:bvpfull}  may be seen as the limit of \eqref{bvpe} as $\epsilon$ goes to $0$. Indeed, in a recent paper Feppon and Ammari \cite{FeAm22} have related the appearance of singular behaviors to the failure of similar compatibility conditions. The problems studied in \cite{FeAm22} are not too far from the one that we analyze in this paper, but involve some specific generalized periodicity conditions. We also mention that  the result that we present here is, in a sense, a periodic counterpart of a result obtained  by Lanza de Cristoforis \cite{La07, La08} for a bounded domain with a single small hole.

%

The paper is organized as follows: In Section \ref{s:prel} we list some preliminary results of periodic potential theory that are used in Section \ref{s:intform} to  transform problem \eqref{bvpe} into an equivalent system of integral equations. In Section \ref{s:rep} we prove our main Theorem \ref{thm:repsol}.

\section{Preliminaries of potential theory}\label{s:prel}

As mentioned in the introduction, we use periodic potential theory to transform problem \eqref{bvpe} into an equivalent system of integral equations. More precisely, we use the periodic double layer potential, whose definition differs from that of the classical double layer potential because we replace the fundamental solution of the Laplace 
operator $\Delta = \sum_{j=1}^n\partial^{2}_{x_j}$ with a periodic analog. This will be a $q$-periodic tempered distribution $S_{q,n}$ such that
	\begin{equation*}
	\Delta S_{q,n}=\sum_{z\in \mathbb{Z}^n}\delta_{qz}-\frac{1}{|Q|_n}\,,
	\end{equation*}
where $\delta_{qz}$ denotes the Dirac distribution with mass in $qz$ and where $|\cdot|_n$ denotes the $n$-dimensional measure of a set. To define $S_{q,n}$ we can take
	$$S_{q,n}(x):=-\sum_{ z\in \mathbb{Z}^n\setminus\{0\} }\frac{1}{|Q|_n4\pi^{2}|q^{-1}z|^{2}   }e^{2\pi i (q^{-1}z)\cdot x}\,,$$
where the series converges in the sense of distributions on $\mathbb{R}^n$ (cf., e.g., Ammari and Kang~\cite[p.~53]{AmKa07}, \cite[\S 2.1]{DaLaMu21}). It can be shown that $S_{q,n}$ is real analytic in $\mathbb{R}^n\setminus q\mathbb{Z}^n$ and is locally integrable in $\mathbb{R}^n$ (cf., e.g., \cite[Thms.~12.3, 12.4]{DaLaMu21}). 

We will also find useful to write  $S_{q,n}$ as the sum of the classical fundamental solution of the Laplacian 
\[
S_n(x) := \begin{cases}
\frac{1}{s_2}\log|x| &\forall x \in \mathbb{R}^2 \setminus \{0\}, \ \ \mbox{if} \ n=2 ,\\
\frac{1}{(2-n)s_n}|x|^{2-n} &\forall x \in \mathbb{R}^n \setminus \{0\}, \ \ \mbox{if} \ n \geq 3,\\
\end{cases}
\]
and  a remainder $R_{q,n}:=S_{q,n}-S_n$ which is regular around the origin. Here above $s_n$ is the $(n-1)$-dimensional measure of the boundary $\partial \mathbb{B}_n(0,1)$ of the unit ball $\mathbb{B}_n(0,1)$ in $\mathbb{R}^n$. We note that   $R_{q,n}$  has an analytic extension to $(\mathbb{R}^n\setminus q\mathbb{Z}^n)\cup\{0\}$, which we still denote  by $R_{q,n}$, and that
\[
\Delta R_{q,n}=\sum_{z\in\mathbb{Z}^n\setminus\{0\}}\delta_{qz}-\frac{1}{|Q|_n}
\]
in the sense of distributions (see,  e.g., \cite[Thm.~12.4]{DaLaMu21}).

We now  recall the definition of  the classical (not periodic) double layer potential. We introduce another set $\tilde\Omega$, which we use as a dummy for our definitions: $\tilde{\Omega}$ is a bounded open connected subset of $\mathbb{R}^n$ of class $C^{1,\alpha}$ such that $\mathbb{R}^n \setminus \overline{\tilde{\Omega}}$  is connected. 
The classical double layer potential supported on $\tilde{\Omega}$ and with density $\theta\in C^{1,\alpha}(\partial\tilde{\Omega})$ is defined by
\[
w_{\tilde{\Omega}}[\theta](t):=
-\int_{\partial\tilde{\Omega}}\nu_{\tilde{\Omega}}(s)\cdot DS_n(t-s)\theta(s)\,d\sigma_{s}
\qquad\forall t\in \mathbb{R}^n\,,
\]
where $\nu_{\tilde{\Omega}}$ denotes the outward unit normal to $\partial{\tilde{\Omega}}$  and the symbol ``$\cdot$'' denotes the scalar product in $\mathbb{R}^n$.   As is well known,  the restriction $w_{\tilde{\Omega}}[\theta]_{|\tilde{\Omega}}$ extends to a function $w_{\tilde{\Omega}}^+[\theta]$  in  $C^{1,\alpha}(\overline{\tilde{\Omega}})$ and  the restriction $w_{\tilde{\Omega}}[\theta]_{|\mathbb{R}^n\setminus\overline{\tilde{\Omega}}}$ extends to a function $w^-_{\tilde \Omega}[\theta]$  in  $C^{1,\alpha}_{\mathrm{loc}}(\mathbb{R}^n\setminus{\tilde{\Omega}})$. Moreover, at  the boundary we have the jump formula: 
\[
w_{\tilde{\Omega}}^{\pm}[\theta]_{|\partial{\tilde{\Omega}}}={\pm}\frac{1}{2}\theta+w_{\tilde{\Omega}}[\theta]_{|\partial \tilde{\Omega}} \qquad \forall\theta\in C^{1,\alpha}(\partial{\tilde{\Omega}})\,
\]
(cf.~Folland \cite[Ch.~3]{Fo95}, \cite[\S~4.5]{DaLaMu21}).

To define the periodic double layer potential we need some more notation about periodic domains. If  $\tilde\Omega_Q$ 
is an arbitrary subset of $\mathbb{R}^n$  such that
${\overline{\tilde\Omega_Q}}\subseteq Q$ (another dummy set), we set
\[
{\mathbb{S}} [\tilde\Omega_Q]:= 
\bigcup_{z\in\mathbb{Z}^n }(qz+\tilde\Omega_Q)=q\mathbb{Z}^n+\tilde\Omega_Q\,,
\qquad
{\mathbb{S}} [\tilde\Omega_Q]^{-}:= \mathbb{R}^n\setminus{\overline{\mathbb{S}[\tilde\Omega_Q]}}\,.
\]
Then a function $u$  from ${\overline{{\mathbb{S}}[\tilde\Omega_Q]}}$ or from ${\overline{{\mathbb{S}}[\tilde\Omega_Q]^{-}}}$ to ${\mathbb{R}}$ 
is $q$-periodic if  $u(x+qz)=u(x)$ for all $x$ in the domain of definition of $u$ and for all  $z \in \mathbb{Z}^n$.
For
$j\in \{0,1\}$ and $\alpha\in\mathopen]0,1[$, we denote by $C^{j,\alpha}_{q}({\overline{{\mathbb{S}}[\tilde\Omega_Q]}} )$ and $C^{j,\alpha}_{q}({\overline{{\mathbb{S}}[\tilde\Omega_Q]^{-}}} )$ the spaces of  $q$-periodic functions of  class $C^{j,\alpha}$ in ${\overline{{\mathbb{S}}[\tilde\Omega_Q]}}$ and in ${\overline{{\mathbb{S}}[\tilde\Omega_Q]^-}}$, respectively (cf.~\cite[p.~491]{DaLaMu21}).  

If $\tilde\Omega_Q$ is of class $C^{1,\alpha}$, then  the periodic double layer potential with density $\mu\in C^{1,\alpha}(\partial\tilde\Omega_Q)$ is defined by 
\[
w_{q,\tilde\Omega_Q}[\mu](x):=
-\int_{\partial\tilde\Omega_Q}\nu_{\tilde\Omega_Q}(y)\cdot DS_{q,n}(x-y)\mu(y)\,d\sigma_{y}
\qquad\forall x\in {\mathbb{R}}^{n}\,,
\]
and we see that the expression in the right-hand side differs from that in the  definition of  $w_{\tilde\Omega}[\mu]$ because we replace $S_n$ with $S_{q,n}$.

It is well known that   the restriction $w_{q,\tilde\Omega_Q}[\mu]_{|\mathbb{S}_q[\tilde\Omega_Q]}$ extends to a function $w_{q,\tilde\Omega_Q}^+[\mu]$  of  $C^{1,\alpha}_q(\overline{\mathbb{S}_q[\tilde\Omega_Q]})$ and  the restriction $w_{q,\tilde\Omega_Q}[\mu]_{|\mathbb{S}_q[\tilde\Omega_Q]^-}$ extends to a function $w_{q,\tilde\Omega_Q}^-[\mu]$  of  $C^{1,\alpha}_{q}(\overline{\mathbb{S}_q[\tilde\Omega_Q]^-})$. Moreover,  on the boundary of $\tilde\Omega_Q$ we have the jump formula
\[
w_{q,\tilde\Omega_Q}^{\pm}[\mu]_{|\partial{{\tilde\Omega_Q}}}={\pm}\frac{1}{2}\mu+w_{q,\tilde\Omega_Q}[\mu]_{|\partial \tilde\Omega_Q} \qquad \forall\mu\in C^{1,\alpha}(\partial{{\tilde\Omega_Q}})\,
\]
(cf., e.g., \cite[Thm.~12.10]{DaLaMu21}). 

\medskip

As mentioned in the introduction, we will use  Roumieu analytic functions. The advantage is that the composition operator 
\[
(u,v)\mapsto u\circ v
\]
is real analytic in the pair of $(u,v)$ if $u$  is taken in a Roumieu class and $v$ in a Schauder space (see Proposition \ref{prop:Pr} below). Also, Roumieu analytic functions produce Roumieu analytic Newtonian potentials (see Theorem \ref{thm:newperpot}). 
So, for all bounded open subsets $\tilde\Omega$ of $\mathbb{R}^n$ and $\rho>0$, we set
\[
C_{\omega,\rho}^0(\overline{\tilde\Omega}) := \left\{u \in C^\infty(\overline{\tilde\Omega})\colon \sup_{\beta \in \mathbb{N}^n}\frac{\rho^{|\beta|}}{|\beta|!}\|D^\beta u\|_{C^0(\overline{\tilde\Omega})}<+\infty \right\}\,,
\]
and 
\[
\|u\|_{C_{\omega,\rho}^0(\overline{\tilde\Omega})}:=  \sup_{\beta \in \mathbb{N}^n}\frac{\rho^{|\beta|}}{|\beta|!}\|D^\beta u\|_{C^0(\overline{\tilde\Omega})} \qquad \forall u \in C_{\omega,\rho}^0(\overline{\tilde\Omega})\,,
\]
where $|\beta|:= \beta_1+\dots+\beta_n$ is the length of the multi-index $\beta:=(\beta_1,\dots,\beta_n)\in \mathbb{N}^n$. As is well known, the Roumieu class $\bigl(C_{\omega,\rho}^0(\overline{\tilde\Omega}),\|\cdot\|_{C_{\omega,\rho}^0(\overline{\tilde\Omega})}\bigr)$ is a Banach space. \par


If $k \in \mathbb{N}$, then we set
\[
C^{k}_{q}(\mathbb{R}^n):=
\{
u\in C^{k}(\mathbb{R}^n):\,
u(x+qz)=u(x) \quad \forall x \in \mathbb{R}^n\, , \forall z\in \mathbb{Z}^n
\}\,,
\]
and
\[
C^{\infty}_{q}(\mathbb{R}^n):=
\{
u\in C^{\infty}(\mathbb{R}^n):\,
u(x+qz)=u(x) \quad \forall x \in \mathbb{R}^n\, , \forall z\in \mathbb{Z}^n
\}\,.
\]
Similarly, if $\rho>0$, we set
\begin{equation}
\label{proum}
C_{q,\omega,\rho}^0(\mathbb{R}^n) := \left\{u \in C^\infty_q(\mathbb{R}^n)\colon \sup_{\beta \in \mathbb{N}^n}\frac{\rho^{|\beta|}}{|\beta|!}\|D^\beta u\|_{C^0(\overline{Q})}<+\infty \right\}\,,
\end{equation}
and 
\[
\|u\|_{C_{q,\omega,\rho}^0(\mathbb{R}^n)}:=  \sup_{\beta \in \mathbb{N}^n}\frac{\rho^{|\beta|}}{|\beta|!}\|D^\beta u\|_{C^0(\overline{Q})} \qquad \forall u \in C_{q,\omega,\rho}^0(\mathbb{R}^n)\,.
\]
We can see that the periodic Roumieu class $\bigl(C_{q,\omega,\rho}^0(\mathbb{R}^n),\|\cdot\|_{C_{q,\omega,\rho}^0(\mathbb{R}^n)}\bigr)$ is a Banach space. 

\medskip

It is common to use Newtonian potentials to convert boundary value problems for the Poisson equation into boundary value problems for the Laplace equation. To keep this tradition alive  we need to introduce a periodic analog of the Newtonian potential: if  $h\in C^{0}_{q}(\mathbb{R}^n)$, then we set 
\[
P_{q}[h](x):= \int_{Q}S_{q,n}(x-y)h(y)\,dy\qquad\forall x\in {\mathbb{R}}^{n}\,.
\]
Some of the properties of the periodic Newtonian potential are listed in the following theorem (we refer to  \cite{DaLaMu20} for an exhaustive overview). 

\begin{theorem}\label{thm:newperpot}
The following statements hold.
\begin{enumerate}
\item[(i)] Let $f \in C^{1}_{q}(\mathbb{R}^n)$. Then $P_{q}[f] \in C^{2}_{q}(\mathbb{R}^n)$ and
\[
\Delta P_{q}[f](x)=f(x)-\frac{1}{|Q|_n}\int_{Q}f(y)\,dy \qquad \forall x \in \mathbb{R}^n\,.
\]
\item[(ii)] Let $\rho>0$. Then there exists $\rho' \in \mathopen]0,\rho]$ such that $P_q[f] \in C^0_{q,\omega,\rho'}(\mathbb{R}^n)$ for all $f \in C^0_{q,\omega,\rho}(\mathbb{R}^n)$ and such that $P_q[\cdot]$ is linear and continuous from $C^0_{q,\omega,\rho}(\mathbb{R}^n)$ to $C^0_{q,\omega,\rho'}(\mathbb{R}^n)$.
\end{enumerate}
\end{theorem}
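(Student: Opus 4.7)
The plan is to treat parts (i) and (ii) separately. For (i), which only assumes $f\in C^{1}_{q}(\mathbb{R}^n)$, I would decompose $S_{q,n}=S_n+R_{q,n}$ and analyse each summand by different classical tools. For (ii), where $f$ is Roumieu analytic and hence smooth, I would iteratively transfer all derivatives from the kernel onto $f$ by integration by parts, exploiting the $q$-periodicity to cancel every boundary contribution on $\partial Q$.

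For (i), the $q$-periodicity of $P_q[f]$ is immediate from the periodicity of $S_{q,n}$. Writing $P_q[f]=N_Q[f]+T_Q[f]$ with $N_Q[f](x):=\int_Q S_n(x-y)f(y)\,dy$ and $T_Q[f](x):=\int_Q R_{q,n}(x-y)f(y)\,dy$, the classical Newtonian potential $N_Q[f]$ of the H\"older density $f_{|\overline{Q}}\in C^{0,\alpha}(\overline{Q})$ satisfies $N_Q[f]\in C^{2,\alpha}(Q)$ and $\Delta N_Q[f]=f$ in $Q$ (cf.~Gilbarg and Trudinger \cite{GiTr83}). For $T_Q[f]$, the key observation is that for $x\in Q$ and $y\in\overline{Q}$ the difference $x-y$ lies in the open box $(-q_{11},q_{11})\times\dots\times(-q_{nn},q_{nn})$, which avoids $q\mathbb{Z}^n\setminus\{0\}$. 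Hence $R_{q,n}(x-y)$ agrees with the real-analytic extension of $R_{q,n}$ on this set, and the distributional identity $\Delta R_{q,n}=\sum_{z\in\mathbb{Z}^n\setminus\{0\}}\delta_{qz}-1/|Q|_n$ reduces there to the classical $\Delta R_{q,n}\equiv -1/|Q|_n$. Differentiating under the integral sign yields $\Delta T_Q[f](x)=-\tfrac{1}{|Q|_n}\int_Q f(y)\,dy$ for $x\in Q$. Summing both contributions produces the Poisson equation on $Q$, and hence on $\mathbb{R}^n$ by periodicity. Since $P_q[f]$ is a continuous $q$-periodic distribution solving $\Delta P_q[f]=f-\tfrac{1}{|Q|_n}\int_Q f$ with $C^1_q$ right-hand side, interior Schauder regularity gives $P_q[f]\in C^{2,\alpha}_q(\mathbb{R}^n)\subseteq C^2_q(\mathbb{R}^n)$.

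For (ii), any $f\in C^0_{q,\omega,\rho}(\mathbb{R}^n)$ is smooth and $q$-periodic, so I would shift derivatives from the kernel onto $f$ one at a time, using $D_{x_j}S_{q,n}(x-y)=-D_{y_j}S_{q,n}(x-y)$ and integration by parts in $y$. Shifting only one derivative at a time keeps the kernel side at the locally integrable order $|x-y|^{1-n}$, and the boundary contributions on opposite faces of $\partial Q$ cancel thanks to the $q$-periodicity of both $S_{q,n}(x-\cdot)$ and $f$. Iterating, one obtains, for all $\beta\in\mathbb{N}^n$ and $x\in\mathbb{R}^n$,
\[
D_x^\beta P_q[f](x)=\int_Q S_{q,n}(x-y)\,D_y^\beta f(y)\,dy.
\]
Setting $M:=\sup_{x\in\overline{Q}}\int_Q|S_{q,n}(x-y)|\,dy$, which is finite because $S_{q,n}$ is locally integrable and the displacement set $\overline{Q}-\overline{Q}$ is bounded, one gets $\|D^\beta P_q[f]\|_{C^0(\overline{Q})}\le M\|D^\beta f\|_{C^0(\overline{Q})}$. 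Multiplying by $\rho^{|\beta|}/|\beta|!$ and taking the supremum over $\beta$ yields $\|P_q[f]\|_{C^0_{q,\omega,\rho}(\mathbb{R}^n)}\le M\|f\|_{C^0_{q,\omega,\rho}(\mathbb{R}^n)}$, proving the claim with $\rho'=\rho$.

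The delicate point is the rigorous justification of the iterated integration by parts in (ii), owing to the singularity of $S_{q,n}$ at the origin. I would carry it out by removing a ball $B_\delta(x)$ before each IBP, performing the one-derivative step on $Q\setminus B_\delta(x)$, and letting $\delta\to 0^+$: the contribution on $\partial B_\delta(x)$ vanishes because $|S_{q,n}(y)|$ is $O(|y|^{2-n})$ for $n\ge 3$ and $O(|\log|y||)$ for $n=2$, while the area factor is of order $\delta^{n-1}$, and the same bookkeeping works when one derivative of $S_{q,n}$ is present on the kernel side. Everything else in the argument reduces to classical potential theory and elementary Banach-space estimates.
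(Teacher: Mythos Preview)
The paper does not actually prove this theorem: it is stated as a list of properties of the periodic Newtonian potential with a reference to \cite{DaLaMu20} for a full treatment. So there is no in-paper argument to compare yours against.

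Your proof is essentially correct and self-contained. Two small points deserve tightening. In part~(i), your decomposition yields the Poisson equation and $C^{2}$ regularity only on the open cell $Q$; the appeal to ``periodicity'' extends this to each translate $qz+Q$ but not automatically across the grid $q\mathbb{Z}^n+\partial Q$. The clean fix is to observe that $y\mapsto S_{q,n}(x-y)f(y)$ is $q$-periodic, so $P_q[f](x)=\int_{a+Q}S_{q,n}(x-y)f(y)\,dy$ for \emph{any} $a\in\mathbb{R}^n$; choosing $a$ so that a given point lies in the interior of $a+Q$ and rerunning your decomposition gives $C^{2}$ regularity everywhere. Alternatively, one verifies $\Delta P_q[f]=f-|Q|_n^{-1}\int_Q f$ directly in $\mathcal{D}'(\mathbb{R}^n)$ via Fubini and the distributional identity for $\Delta S_{q,n}$, after which interior Schauder estimates conclude.

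In part~(ii), the integration-by-parts identity $D^\beta P_q[f]=P_q[D^\beta f]$ is correct, and the cancellation of the $\partial Q$ contributions by the $q$-periodicity of both $S_{q,n}(x-\cdot)$ and $f$ is exactly the right mechanism. Your ball-removal justification handles the singularity; you should also make explicit that the first passage of $D_{x_j}$ under the integral is the classical differentiation formula for Newtonian-type potentials (the gradient kernel being $O(|x-y|^{1-n})$, hence integrable). A nice by-product of your direct estimate is that it yields $\rho'=\rho$, which is sharper than the statement requires.
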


Then we introduce a slight variant of   Preciso \cite[Prop.~4.2.16, p.~51]{Pr98} and
\cite[Prop.~1.1, p.~101]{Pr00} on the real analyticity of a composition operator (see also Lanza de Cristoforis \cite[Prop.~2.17, Rem.~2.19]{La98} and \cite[Prop.~9, p.~214]{La07}).

\begin{proposition}\label{prop:Pr}
Let $m$, $h$, $k \in \mathbb{N}$, $h$, $k \geq 1$. Let $\alpha \in \mathopen]0,1]$, $\rho>0$. Let $\Omega'$, $\Omega''$ be bounded open connected  subsets of $\mathbb{R}^h$, $\mathbb{R}^k$, respectively. Let $\Omega''$ be of class $C^1$. Then the operator $T$ defined by
\[
T[u,v]:= u\circ v
\]
for all $(u,v)\in C^0_{\Omega',\rho}(\overline{\Omega'}) \times C^{m,\alpha}(\overline{\Omega''},\Omega')$ is real analytic from the open subset $C^0_{\Omega', \rho}(\overline{\Omega'})\times C^{m,\alpha}(\overline{\Omega''}, \Omega')$ of $C^0_{\Omega',\rho}(\overline{\Omega'})\times C^{m,\alpha}(\overline{\Omega''}, \mathbb{R}^
h)$ to $C^{m,\alpha}(\overline{\Omega''})$.
\end{proposition}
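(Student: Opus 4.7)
This proposition is a minor reformulation of the composition results of Preciso \cite{Pr98, Pr00} and Lanza de Cristoforis \cite{La98, La07}, so the plan is to follow their argument and verify that the adaptation needed here is merely notational. Real analyticity being a local property, it will suffice to exhibit, in a neighborhood of each fixed base point $(u_0,v_0)$ in the open subset of the statement, a convergent power series expansion of $T$ as a sum of continuous multihomogeneous polynomials in the pair of increments $(h,w):=(u-u_0,v-v_0)$.

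First I would choose $\delta>0$ so small that $v_0(\overline{\Omega''})+\overline{\mathbb{B}_h(0,\delta)}\subseteq\Omega'$ (possible because $\overline{\Omega''}$ is compact and $\Omega'$ is open) and restrict to pairs $(h,w)$ with $\|h\|_{C^0_{\omega,\rho}(\overline{\Omega'})}<r_u$ and $\|w\|_{C^{m,\alpha}(\overline{\Omega''})}<r_v$, with $r_v$ small enough that $v_0+w$ still maps $\overline{\Omega''}$ into $\Omega'$ and $\|w\|_{C^0(\overline{\Omega''})}<\delta$. For such $(h,w)$ the Roumieu estimates and the real analyticity of $u_0$ on a neighborhood of $\overline{\Omega'}$ yield the pointwise Taylor identity
\begin{equation*}
(u_0+h)\bigl(v_0(x)+w(x)\bigr)\,=\,\sum_{\beta\in\mathbb{N}^h}\frac{D^\beta(u_0+h)\bigl(v_0(x)\bigr)}{\beta!}\,w(x)^\beta \qquad \forall x\in\overline{\Omega''}.
\end{equation*}
Regrouping by total degree in $(h,w)$, and separating the purely $w$-homogeneous contribution from $u_0$ from the $h$-linear, $w$-multihomogeneous contribution from $h$, produces a formal sum of continuous multihomogeneous polynomials in $(h,w)$, and the task reduces to showing absolute convergence of this sum in $C^{m,\alpha}(\overline{\Omega''})$.

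Convergence rests on two ingredients: the Roumieu bound $\|D^\beta u\|_{C^0(\overline{\Omega'})}\leq|\beta|!\,\rho^{-|\beta|}\|u\|_{C^0_{\omega,\rho}(\overline{\Omega'})}$, and the fact that $C^{m,\alpha}(\overline{\Omega''})$ is a Banach algebra, which gives $\|w^\beta\|_{C^{m,\alpha}(\overline{\Omega''})}\leq K^{|\beta|}\|w\|_{C^{m,\alpha}(\overline{\Omega''})}^{|\beta|}$ for some $K$ depending only on $m$, $\alpha$ and $\Omega''$. To combine them I would apply a Fa\`a di Bruno chain rule to $D^\gamma(D^\beta u\circ v_0)$ for $|\gamma|\leq m$, transferring the Roumieu estimate into an estimate of the form $\|D^\beta u\circ v_0\|_{C^{m,\alpha}(\overline{\Omega''})}\leq C\,|\beta|!\,(\rho')^{-|\beta|}\|u\|_{C^0_{\omega,\rho}(\overline{\Omega'})}$ for some $\rho'\in\mathopen]0,\rho\mathclose[$ depending on $v_0$ and $m$; choosing $r_u$ and $r_v$ small relative to $\rho'/K$ then yields geometric convergence of the double series and identifies each multihomogeneous term as a continuous polynomial in $(h,w)$. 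The main obstacle is this Fa\`a di Bruno step: one must verify that the combinatorial factors arising when one differentiates $D^\beta u\circ v_0$ up to order $m$ in $x$ remain compatible with the factorial $|\beta|!$ from the Roumieu bound, so that only a harmless shrinking $\rho\to\rho'$ is required. This is precisely the technical heart of \cite{Pr98, Pr00, La98}, and once it is in hand the rest of the verification is routine and does not interact with the specific geometric features of the present paper.
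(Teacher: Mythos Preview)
The paper does not give its own proof of this proposition: it is stated as a ``slight variant'' of results of Preciso \cite{Pr98,Pr00} and Lanza de Cristoforis \cite{La98,La07}, with the argument deferred entirely to those references. Your proposal is a faithful outline of precisely that argument---local Taylor expansion of $u$ at $v_0(x)$, Roumieu control of the derivatives $D^\beta u$, the Banach algebra property of $C^{m,\alpha}(\overline{\Omega''})$, and a Fa\`a di Bruno estimate to pass from $C^0$ bounds on $D^\beta u$ to $C^{m,\alpha}$ bounds on $D^\beta u\circ v_0$---so there is nothing to compare: you are reconstructing what the paper merely cites.
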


Finally, we need a last technical  lemma about the real analytic dependence of certain maps related to the change of variables in integrals and to the pullback of the outer normal field.
For a proof we refer to Lanza de Cristoforis and Rossi \cite[p.~166]{LaRo04}
and to Lanza de Cristoforis \cite[Prop. 1]{La07}.
\begin{lemma}\label{rajacon}
 Let $\alpha$, $\Omega$ be as in \eqref{Omega_def}.  Then the following statements hold.
\begin{itemize}
\item[(i)] For each $\psi \in \mathcal{A}^{1,\alpha}_{\partial \Omega}$, there exists a unique  
$\tilde \sigma[\psi] \in C^{0,\alpha}(\partial\Omega)$ such that $\tilde \sigma[\psi] > 0$ and 
\[ 
  \int_{\psi(\partial\Omega)}w(s)\,d\sigma_s=  \int_{\partial\Omega}w \circ \psi(y)\tilde\sigma[\psi](y)\,d\sigma_y, \qquad \forall w \in L^1(\psi(\partial\Omega)).
\]
Moreover, the map $\tilde \sigma[\cdot]$ from $\mathcal{A}^{1,\alpha}_{\partial \Omega}$ to $ C^{0,\alpha}(\partial\Omega)$ is real analytic.
\item[(ii)] The map from $\mathcal{A}^{1,\alpha}_{\partial \Omega}$ to $ C^{0,\alpha}(\partial\Omega, \mathbb{R}^{n})$ that takes $\psi$ to $\nu_{\mathbb{I}[\psi]} \circ \psi$ is real analytic.
\end{itemize}
\end{lemma}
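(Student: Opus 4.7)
The plan is to reduce the two statements to local coordinate computations on $\partial\Omega$. Since $\partial\Omega$ is compact and of class $C^{1,\alpha}$, it admits a finite atlas of $C^{1,\alpha}$-parametrizations $\gamma_k:U_k\subset\mathbb{R}^{n-1}\to\partial\Omega$, and by means of a subordinate partition of unity it suffices, for each chart $\gamma=\gamma_k$, to prove real analyticity of the maps $\psi\mapsto\tilde\sigma[\psi]\circ\gamma$ and $\psi\mapsto(\nu_{\mathbb{I}[\psi]}\circ\psi)\circ\gamma$ from $\mathcal{A}_{\partial\Omega}^{1,\alpha}$ into $C^{0,\alpha}(\overline{U})$ and $C^{0,\alpha}(\overline{U},\mathbb{R}^n)$, respectively.

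For (i), I would introduce the $(n-1)\times(n-1)$ Gram matrix $G[\psi](u)$ with entries $G_{ij}[\psi](u):=\partial_i(\psi\circ\gamma)(u)\cdot\partial_j(\psi\circ\gamma)(u)$, so that the classical area-change formula yields $\tilde\sigma[\psi]\circ\gamma(u)=\sqrt{\det G[\psi](u)}/\sqrt{\det G[\mathrm{id}_{\partial\Omega}](u)}$, the denominator absorbing the intrinsic area density of $\partial\Omega$ in the chart. The assignment $\psi\mapsto G[\psi]$ is a continuous symmetric bilinear map from $C^{1,\alpha}(\partial\Omega,\mathbb{R}^n)$ into the space of $(n-1)\times(n-1)$ matrix-valued functions in $C^{0,\alpha}(\overline{U})$, hence real analytic; the determinant is polynomial in its entries; and on $\mathcal{A}_{\partial\Omega}^{1,\alpha}$ the function $\det G[\psi]$ is bounded below by a positive constant on $\overline{U}$ because $\psi$ has injective differential. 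Since $t\mapsto\sqrt{t}$ is real analytic on $\mathopen]0,+\infty\mathclose[$, the induced superposition operator is real analytic on the open subset of strictly positive functions in $C^{0,\alpha}(\overline{U})$, and composition of analytic maps yields (i).

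For (ii), in the same chart I would express the outward unit normal as $(\nu_{\mathbb{I}[\psi]}\circ\psi\circ\gamma)(u)=\varepsilon(\psi)\,N[\psi](u)/\|N[\psi](u)\|$, where $N[\psi](u):=\partial_1(\psi\circ\gamma)(u)\wedge\cdots\wedge\partial_{n-1}(\psi\circ\gamma)(u)$ is the generalized cross product. Its components are the $(n-1)\times(n-1)$ minors of the Jacobian of $\psi\circ\gamma$, hence polynomial in the first derivatives of $\psi$, so $\psi\mapsto N[\psi]$ is real analytic into $C^{0,\alpha}(\overline{U},\mathbb{R}^n)$. Moreover $\|N[\psi]\|^2=\det G[\psi]$ is bounded below as in (i), so the normalization is an analytic operation. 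The sign $\varepsilon(\psi)\in\{-1,+1\}$, which selects the orientation pointing out of $\mathbb{I}[\psi]$, is locally constant on connected components of $\mathcal{A}_{\partial\Omega}^{1,\alpha}$, and can be pinned down by continuous deformation to a reference parametrization (for instance $\mathrm{id}_{\partial\Omega}$, where the answer is the known $\nu_\Omega$).

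The main obstacle I anticipate is making the sign $\varepsilon(\psi)$ rigorous: one has to argue that the Jordan--Leray decomposition provides an outward unit normal that depends continuously on $\psi\in\mathcal{A}_{\partial\Omega}^{1,\alpha}$ in the $C^{0,\alpha}$-topology, so that the locally constant sign is determined on each connected component by one evaluation. A secondary but routine point is the patching of the different charts $\gamma_k$: one must verify that the analytic local representatives glue consistently into an intrinsic object on $\partial\Omega$, which is ensured by the chart-invariance of $\tilde\sigma[\psi]\,d\sigma$ and of the unit normal field, together with the fact that analyticity from a Banach space into $C^{0,\alpha}(\partial\Omega)$ is checked locally through any finite atlas.
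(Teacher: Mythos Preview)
The paper does not actually prove this lemma: immediately before its statement it says ``For a proof we refer to Lanza de Cristoforis and Rossi \cite[p.~166]{LaRo04} and to Lanza de Cristoforis \cite[Prop.~1]{La07},'' and no argument is given. Your chart-by-chart approach via the Gram determinant and the generalized cross product is precisely the standard one and is, in substance, what those references carry out; so there is no genuine divergence to discuss.

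One refinement worth recording: the sign $\varepsilon(\psi)$ you introduce is in fact chart-dependent as well as $\psi$-dependent, since reversing the orientation of the parametrization $\gamma$ flips the cross product $N[\psi]$. What you need is that, for a \emph{fixed} chart $\gamma$, the function $\psi\mapsto\varepsilon_\gamma(\psi)$ is locally constant on $\mathcal{A}^{1,\alpha}_{\partial\Omega}$. This follows once you know that $\psi\mapsto\nu_{\mathbb{I}[\psi]}\circ\psi$ is continuous into, say, $C^0(\partial\Omega,\mathbb{R}^n)$: then $\varepsilon_\gamma(\psi)=\mathrm{sgn}\bigl((\nu_{\mathbb{I}[\psi]}\circ\psi\circ\gamma)\cdot N[\psi]\bigr)$ is a continuous $\{-1,+1\}$-valued function of $\psi$ and hence locally constant. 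That continuity, in turn, can be obtained by a degree-theoretic argument showing that the bounded component $\mathbb{I}[\psi]$ varies continuously with $\psi$ (this is where the Jordan--Leray theorem enters). You have correctly flagged this as the delicate step; the rest of your outline is sound.
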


 \section{Formulation of problem (\ref{bvpe})  in terms of integral equations}\label{s:intform}
 
First we convert problem \eqref{bvpe} into a Dirichlet problem for the Laplace equation. In order to do so, we would use a function  whose Laplacian equals the right-hand side $f \in C^0_{q,\omega,\rho}(\mathbb{R}^n)$ of the first equation in problem \eqref{bvpe}. The natural candidate would be the periodic Newtonian potential $P_{q}[  f]$. However, we have
\begin{equation}\label{eq:DeltaP}
\Delta P_{q}[  f] (x)=  f(x)-\frac{1}{|Q|_n}\int_{Q}f(y)\, dy \qquad \forall x\in 
 \mathbb{S}[\Omega_{\epsilon,\phi}]^-\, .
\end{equation}
So we need to get rid of the term $\frac{1}{|Q|_n}\int_{Q}f(y)\, dy$  in equation \eqref{eq:DeltaP}: We note  that the function from $\mathbb{R}^n \setminus (p+q\mathbb{Z}^n)$ to $\mathbb{R}$ that takes $x$ to $-  S_{q,n}(x-p)\int_{Q}f(y)\, dy$ is $q$-periodic and analytic, and that
 \[
 \Delta \bigg [-  S_{q,n}(x-p)\int_{Q}f(y)\, dy\bigg ]=\frac{1}{|Q|_n}\int_{Q}f(y)\, dy \qquad \forall x \in \mathbb{R}^n \setminus (p+q\mathbb{Z}^n)\, .
 \]
 As a consequence,
\begin{equation}\label{eq:DeltaP-S}
 \Delta \bigg [  P_{q}[f](x)-  S_{q,n}(x-p)\int_{Q}f(y)\, dy\bigg ]=  f(x) \qquad \forall x \in  \mathbb{S}[\Omega_{\epsilon,\phi}]^-\,
\end{equation}
 and we can use the corrected Newtonian potential 
 \[
 P_{q}[f](x)-  S_{q,n}(x-p)\int_{Q}f(y)\, dy
 \]
to transform problem \eqref{bvpe} into a Dirichlet problem for the Laplace equation,  that in turn we can analyze using the periodic double layer potential.

 This is what we do to prove the following Theorem \ref{biju}: first we transform the Dirichlet-Poisson problem into a Dirichlet-Laplace problem, and then we represent the solution as the sum of a  constant and a double layer potential with a  density that satisfies a certain boundary integral equation pulled-back to $\partial \Omega$.
 
 \begin{theorem}\label{biju}
 Let $\alpha \in \mathopen]0,1[$. Let $\rho>0$. Let $p\in Q$. Let $\Omega$ be as in \eqref{Omega_def}. Let $(\phi_0,g_0,f_0) \in \mathcal{A}^{1,\alpha}_{\partial \Omega} \times C^{1,\alpha}(\partial \Omega) \times C^0_{q,\omega,\rho}(\mathbb{R}^n)$. Let $\epsilon_{0}$, $\mathcal{O}_{\phi_0}$ be as in \eqref{e0}.  Let $(\epsilon,\phi,g,f) \in \mathopen]0,\epsilon_{0}\mathclose[\times \mathcal{O}_{\phi_0} \times C^{1,\alpha}(\partial \Omega) \times C^0_{q,\omega,\rho}(\mathbb{R}^n)$. Then problem \eqref{bvpe} has a unique solution $u[\epsilon,\phi,g,f]$ in $C^{1,\alpha}_{q}(\overline{\mathbb{S}[\Omega_{\epsilon,\phi}]^-})$, which is delivered by the formula
\begin{equation}\label{biju1}
\begin{split}
u[\epsilon,\phi,g,f](x):=\,\, &\omega(\epsilon,\phi,g,f,x)+ P_{q}[f]( x )-  S_{q,n}(x-p)\int_{Q}f(y)\, dy\\&+\delta_{2,n} \frac{  \log \epsilon}{2 \pi} \int_{Q}f(y)\, dy\qquad\qquad\qquad
\forall x\in \overline{\mathbb{S}[\Omega_{\epsilon,\phi}]^-}\,,
\end{split}
\end{equation}
where
\[
\omega(\epsilon,\phi,g,f,x):=
w_{q, \Omega_{\epsilon,\phi}}^{-}[ \theta\circ \phi^{(-1)}(\epsilon^{-1}(\cdot- p))]( x )+c
\qquad\forall x\in \overline{\mathbb{S}[\Omega_{\epsilon,\phi}]^-}
\]
and where $(\theta,c)$ is the unique solution in $C^{1,\alpha}(\partial\Omega) \times{\mathbb{R}}$ of  the system of integral equations
\begin{eqnarray}
\label{biju2}
\lefteqn{
-\frac{1}{2}\theta (t) 
-   \int_{\partial\Omega}\nu_{\mathbb{I}[\phi]}\circ \phi(s)\cdot DS_{n}(  \phi(t)-\phi(s))\theta (s)\tilde{\sigma}[\phi](s)  \,d\sigma_{s}
}
\\ \nonumber
&&\qquad\qquad\quad\quad
-\epsilon^{n-1}\int_{\partial\Omega}\nu_{\mathbb{I}[\phi]}\circ \phi(s)\cdot DR_{q,n}(\epsilon (\phi(t)-\phi(s)))\theta (s)\tilde{\sigma}[\phi](s)  \,d\sigma_{s} +c
\\ \nonumber
&&\qquad\qquad\quad
=
g(t)
-  \int_{Q}S_{ q,n}( p+\epsilon \phi(t)-y) f(y)\,dy\\ \nonumber
&&\qquad\qquad\quad
\ +\frac{1}{\epsilon^{n-2}} S_{n}( \phi(t)) \int_{Q}f(y)\, dy+  R_{q,n}( \epsilon \phi(t)) \int_{Q}f(y)\, dy
\qquad\forall t\in\partial \Omega\,,\\
\label{biju2bis}
\lefteqn{
\int_{\partial \Omega}\theta \tilde{\sigma}[\phi]\, d\sigma=0\, .}
\end{eqnarray}
\end{theorem}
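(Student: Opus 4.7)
The plan is to reduce problem \eqref{bvpe} to a boundary value problem for the Laplace equation, represent its solution as a periodic double layer potential plus an additive constant, and then pull back the resulting boundary integral equation to the fixed boundary $\partial\Omega$.

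First, I would observe that the corrected Newtonian potential $P_{q}[f](x)-S_{q,n}(x-p)\int_{Q}f(y)\,dy$ is $q$-periodic and, by \eqref{eq:DeltaP-S}, has Laplacian equal to $f$ on $\mathbb{S}[\Omega_{\epsilon,\phi}]^-$. The additional summand $\delta_{2,n}(\log\epsilon)/(2\pi)\int_Q f\,dy$ in \eqref{biju1} is a constant in $x$ and therefore does not change the Laplacian; its only purpose is to absorb a $\log\epsilon$ contribution that will appear when we pull back the boundary equation to $\partial\Omega$ in dimension two. Consequently, $u$ of the form \eqref{biju1} solves \eqref{bvpe} if and only if $\omega(\epsilon,\phi,g,f,\cdot)$ is a $q$-periodic harmonic function on $\mathbb{S}[\Omega_{\epsilon,\phi}]^-$ satisfying a specific Dirichlet datum on $\partial\Omega_{\epsilon,\phi}$, explicit in terms of $g$, $P_q[f]$, and $S_{q,n}(\cdot-p)\int_Q f\,dy$.

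Next I would make the ansatz $\omega = w^-_{q,\Omega_{\epsilon,\phi}}[\mu]+c$ with $\mu:=\theta\circ\phi^{(-1)}(\epsilon^{-1}(\cdot-p))$, which is automatically $q$-periodic and harmonic on $\mathbb{S}[\Omega_{\epsilon,\phi}]^-$. The jump formula for the periodic double layer potential gives the boundary trace at $p+\epsilon\phi(t)\in\partial\Omega_{\epsilon,\phi}$; changing variable $y=p+\epsilon\phi(s)$ in the layer integral, with the surface measure $d\sigma_y=\epsilon^{n-1}\tilde\sigma[\phi](s)\,d\sigma_s$ and the pulled-back normal $\nu_{\mathbb{I}[\phi]}\circ\phi(s)$ furnished by Lemma~\ref{rajacon}, brings the equation to $\partial\Omega$. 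The key algebraic identities are the decomposition $S_{q,n}=S_n+R_{q,n}$ together with $DS_n(\epsilon\xi)=\epsilon^{1-n}DS_n(\xi)$ and $S_n(\epsilon\xi)=\epsilon^{2-n}S_n(\xi)+\delta_{2,n}(\log\epsilon)/(2\pi)$. The first identity absorbs the $\epsilon^{n-1}$ factor on the $S_n$-kernel but leaves $\epsilon^{n-1}$ in front of the $R_{q,n}$-kernel, reproducing the left-hand side of \eqref{biju2}. The second identity, applied to $S_{q,n}(\epsilon\phi(t))$, produces the term $(1/\epsilon^{n-2})S_n(\phi(t))\int_Q f\,dy$ and, for $n=2$, an extra $(\log\epsilon)/(2\pi)\int_Q f\,dy$ that is exactly cancelled by the $\delta_{2,n}$-summand in \eqref{biju1}, yielding the right-hand side of \eqref{biju2}.

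Finally I would establish existence and uniqueness of $(\theta,c)\in C^{1,\alpha}(\partial\Omega)\times\mathbb{R}$ solving the system \eqref{biju2}--\eqref{biju2bis}. The left-hand side of \eqref{biju2} (augmented with the linear functional in \eqref{biju2bis}) defines a Fredholm operator of index zero on $C^{1,\alpha}(\partial\Omega)\times\mathbb{R}$: the boundary double-layer operator associated with $S_n$ is compact by standard Schauder theory, the $R_{q,n}$-part is even smoothing, and the constant $c$ enters additively. Injectivity reduces to the uniqueness of the exterior periodic Dirichlet problem for the Laplace equation on $\overline{\mathbb{S}[\Omega_{\epsilon,\phi}]^-}$, which in turn follows from the maximum principle applied on the bounded domain $\overline{Q}\setminus\Omega_{\epsilon,\phi}$; the auxiliary condition \eqref{biju2bis} is exactly what is needed to eliminate the one-dimensional kernel of the double-layer ansatz (constant densities with vanishing integral). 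Uniqueness of $u[\epsilon,\phi,g,f]$ in $C^{1,\alpha}_q(\overline{\mathbb{S}[\Omega_{\epsilon,\phi}]^-})$ follows from the same maximum principle. I expect the main obstacle to be the careful bookkeeping of the $\epsilon$-powers in the change of variables and the simultaneous treatment of $n\ge 3$ and $n=2$, where the logarithmic correction must be matched exactly between the pulled-back fundamental solution and the $\delta_{2,n}$-term in \eqref{biju1}.
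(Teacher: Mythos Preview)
Your proposal is correct and follows essentially the same route as the paper: reduce \eqref{bvpe} to the periodic Dirichlet--Laplace problem \eqref{bvpe3} via the corrected Newtonian potential, represent its solution as a periodic double layer potential plus a constant, and pull back to $\partial\Omega$ using the decomposition $S_{q,n}=S_n+R_{q,n}$ together with the scaling identities for $S_n$ and $DS_n$. The only difference is cosmetic: the paper invokes \cite[Prop.~12.24]{DaLaMu21} directly for the unique solvability of \eqref{bvpe3} and the $(\theta,c)$-representation, whereas you sketch the underlying Fredholm/maximum-principle argument yourself.
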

\begin{proof}
By \eqref{eq:DeltaP-S} we can see that  a function $u \in 
C^{1,\alpha}_{q}(\overline{\mathbb{S}[\Omega_{\epsilon,\phi}]^-})$ solves problem \eqref{bvpe} if and only if  the function 
\[
u(x)- P_{q}[f](x)+  S_{q,n}(x-p)\int_{Q}f(y)\, dy-\delta_{2,n} \frac{  \log \epsilon}{2 \pi} \int_{Q}f(y)\, dy
\qquad  \forall x\in \overline{\mathbb{S}[\Omega_{\epsilon,\phi}]^-}\,
\]
is a solution of the following boundary value problem
\begin{equation}
\label{bvpe3}
\left\{
\begin{array}{ll}
\Delta \omega  (x)=0 & \forall x\in \mathbb{S}[\Omega_{\epsilon,\phi}]^-\,,
\\
\omega(x+qz)=\omega(x)& \forall x\in \overline{\mathbb{S}[\Omega_{\epsilon,\phi}]^-}\, ,\forall z \in \mathbb{Z}^n\, ,
\\
\omega(x)= g\circ\phi^{(-1)}(\epsilon^{-1}(x-  p))
- P_{q}[ f](x)&\\
\qquad\quad+  S_{q,n}(x-p)\int_{Q}f(y)\, dy-\delta_{2,n} \frac{  \log \epsilon}{2 \pi} \int_{Q}f(y)\, dy
& \forall x\in  \partial\Omega_{\epsilon,\phi}\,.
\end{array}
\right.
\end{equation}
By \cite[Prop.~12.24]{DaLaMu21} the solution of problem \eqref{bvpe3}
exists, is unique, belongs to $C^{1,\alpha}_{q}(\overline{\mathbb{S}[\Omega_{\epsilon,\phi}]^-})$, and can be written as 
\[
\omega(x)=w_{q, \Omega_{\epsilon,\phi}}^{-}[ \theta\circ \phi^{(-1)}(\epsilon^{-1}(\cdot- p))](x)+c\quad
\forall x\in \overline{\mathbb{S}[\Omega_{\epsilon,\phi}]^-}\,,
\]
where  $(\theta,c)$ is the unique pair in $C^{1,\alpha}(\partial\Omega)\times {\mathbb{R}}$ such that  
  $\int_{\partial \Omega}\theta \tilde{\sigma}[\phi]\, d\sigma=0$ and
\begin{eqnarray*}
\lefteqn{
-\frac{1}{2}\theta\circ \phi^{(-1)} (\epsilon^{-1}(x- p))+
w_{q,\Omega_{\epsilon,\phi}}[\theta\circ \phi^{(-1)}(\epsilon^{-1}(\cdot- p))](x)+c
}
\\ \nonumber
&&\qquad\qquad\qquad 
=
g\circ \phi^{(-1)}(\epsilon^{-1}(x- p))
- P_{q}[f](x)\\ \nonumber
&&\qquad\qquad\qquad\quad
+  S_{q,n}(x-p)\int_{Q}f(y)\, dy-\delta_{2,n} \frac{  \log \epsilon}{2 \pi} \int_{Q}f(y)\, dy\qquad\forall x\in\partial \Omega_{\epsilon,\phi}
 \,.
\end{eqnarray*}
By a change of variable, the last equation can be rewritten as 
\begin{equation}\label{eqabove}
\begin{split}
-\frac{1}{2}&\theta (t) 
-\epsilon^{n-1}  \int_{\partial\Omega}\nu_{\Omega_{\epsilon,\phi}}
( p +\epsilon \phi(s))
 DS_{ q,n}( p +\epsilon \phi(t) -
( p+\epsilon \phi(s)))\theta (s) \tilde{\sigma}[\phi](s) \,d\sigma_{s} +c
\\ 
&\qquad\quad
=
g(t)
- \int_{Q}S_{ q,n}( p +\epsilon \phi(t)- y) f(y)\,dy \\ 
&\qquad\quad\quad+  S_{q,n}(p+\epsilon \phi(t)-p)\int_{Q}f(y)\, dy-\delta_{2,n} \frac{  \log \epsilon}{2 \pi} \int_{Q}f(y)\, dy
\qquad\forall t\in\partial \Omega\,. 
\end{split}
\end{equation}
Then we note that
\[
\begin{split}
S_{q,n}(\epsilon \phi(t))\int_{Q}f(y)\, dy=&\,\frac{1}{\epsilon^{n-2}}S_{n}(\phi(t))\int_{Q}f(y)\, dy+\delta_{2,n}\frac{\log \epsilon}{2\pi} \int_{Q}f(y)\, dy \\&+ R_{q,n}(\epsilon \phi(t))\int_{Q}f(y)\, dy \qquad \forall t \in \partial \Omega
\end{split}
\]
and, since
\[
\nu_{\Omega_{\epsilon,\phi}}
( p +\epsilon \phi(s))=\nu_{\mathbb{I}[\phi]}\circ \phi(s) \qquad \forall s \in \partial \Omega
\]
(cf., e.g., Lanza de Cristoforis \cite[Lem.~3.1]{La08}),
equation \eqref{eqabove} can be rewritten as
\[
\begin{split}
-\frac{1}{2}&\theta (t) 
-\epsilon^{n-1} \int_{\partial\Omega}\nu_{\mathbb{I}[\phi]}\circ \phi(s)
DS_{ q,n}(\epsilon (\phi(t)-\phi(s)))\theta (s)\tilde{\sigma}[\phi](s)  \,d\sigma_{s} +c
\\ 
&\qquad\quad
=
g(t)
-  \int_{Q}S_{ q,n}( p+\epsilon \phi(t)-y) f(y)\,dy\\ 
&\qquad\quad
\ +\frac{1}{\epsilon^{n-2}} S_{n}( \phi(t)) \int_{Q}f(y)\, dy+  R_{q,n}( \epsilon \phi(t)) \int_{Q}f(y)\, dy
\qquad\forall t\in\partial \Omega\,,
\end{split}
\]
which is easily seen to be equivalent to \eqref{biju2}.
\end{proof}
 
 We denote by $(\theta_{\epsilon,\phi,g,f},c_{\epsilon,\phi,g,f})$ the unique solution of \eqref{biju2}--\eqref{biju2bis}. We would like, however, to have integral equations that are defined also for $\epsilon=0$, and system \eqref{biju2}-\eqref{biju2bis} is not. So we  rescale and take 
 \[
(\theta^\#_{\epsilon,\phi,g,f},c^\#_{\epsilon,\phi,g,f}):=\epsilon^{n-2} (\theta_{\epsilon,\phi,g,f},c_{\epsilon,\phi,g,f})
 \]
 for all $(\epsilon,\phi,g,f) \in \mathopen]0,\epsilon_{0}\mathclose[\times \mathcal{O}_{\phi_0} \times C^{1,\alpha}(\partial \Omega) \times C^0_{q,\omega,\rho}(\mathbb{R}^n)$. We see that $(\theta^\#_{\epsilon,\phi,g,f},c^\#_{\epsilon,\phi,g,f})$ coincide with the unique pair  $(\theta^\#,c^\#)\in C^{1,\alpha}(\partial\Omega) \times{\mathbb{R}}$ such that
\begin{eqnarray}
\label{biju2sharp}
\lefteqn{
-\frac{1}{2}\theta^\# (t) 
-   \int_{\partial\Omega}\nu_{\mathbb{I}[\phi]}\circ \phi(s)\cdot DS_{n}(  \phi(t)-\phi(s))\theta^\# (s) \tilde{\sigma}[\phi](s) \,d\sigma_{s}
}
\\ \nonumber
&&\qquad\quad\quad
-\epsilon^{n-1}\int_{\partial\Omega}\nu_{\mathbb{I}[\phi]}\circ \phi(s)\cdot DR_{q,n}(\epsilon (\phi(t)-\phi(s)))\theta^\# (s) \tilde{\sigma}[\phi](s) \,d\sigma_{s} +c^\#
\\ \nonumber
&&\qquad\quad
=
\epsilon^{n-2} g(t)
-  \epsilon^{n-2} \int_{Q}S_{ q,n}( p+\epsilon \phi(t)-y) f(y)\,dy\\ \nonumber
&&\qquad\quad
\ +S_{n}( \phi(t)) \int_{Q}f(y)\, dy+  \epsilon^{n-2} R_{q,n}( \epsilon \phi(t)) \int_{Q}f(y)\, dy
\qquad\forall t\in\partial \Omega\,,\\
\label{biju2sharpbis}\lefteqn{
\int_{\partial \Omega}\theta^\# \tilde{\sigma}[\phi]\, d\sigma=0\,,}
\end{eqnarray}
and \eqref{biju2sharp}-\eqref{biju2sharpbis} makes sense also for $\epsilon=0$.

Using $(\theta^\#_{\epsilon,\phi,g,f},c^\#_{\epsilon,\phi,g,f})$ instead of $(\theta_{\epsilon,\phi,g,f},c_{\epsilon,\phi,g,f})$ we obtain from  Theorem \ref{biju} the following alternative representation formula for $u[\epsilon,\phi,g,f]$.

 \begin{corollary}\label{bijusharp}
 Let $\alpha \in \mathopen]0,1[$. Let $\rho>0$.  Let $p\in Q$. Let $\Omega$ be as in \eqref{Omega_def}.  Let $(\phi_0,g_0,f_0) \in \mathcal{A}^{1,\alpha}_{\partial \Omega} \times C^{1,\alpha}(\partial \Omega) \times C^0_{q,\omega,\rho}(\mathbb{R}^n)$. Let $\epsilon_{0}$, $\mathcal{O}_{\phi_0}$ be as in \eqref{e0}.  Let $(\epsilon,\phi,g,f) \in \mathopen]0,\epsilon_{0}\mathclose[\times \mathcal{O}_{\phi_0} \times C^{1,\alpha}(\partial \Omega) \times C^0_{q,\omega,\rho}(\mathbb{R}^n) $. Then the  unique solution $u[\epsilon,\phi,g,f]$ 
 in $C^{1,\alpha}_{q}(\overline{\mathbb{S}[\Omega_{\epsilon,\phi}]^-})$ of problem \eqref{bvpe} (which is also given 
 by \eqref{biju1}) can be written as
\[
\begin{split}
u[\epsilon,\phi,g,f](x)  = \,\, &\frac{1}{\epsilon^{n-2}}w_{q, \Omega_{\epsilon,\phi}}^{-}[ \theta^\#_{\epsilon,\phi,g,f}\circ \phi^{(-1)}(\epsilon^{-1}(\cdot- p))]( x )+\frac{1}{\epsilon^{n-2}}c^\#_{\epsilon,\phi,g,f}\\
&+ P_{q}[f]( x )-  S_{q,n}(x-p)\int_{Q}f(y)\, dy\\&+\delta_{2,n} \frac{  \log \epsilon}{2 \pi} \int_{Q}f(y)\, dy\qquad\qquad\qquad
\forall x\in \overline{\mathbb{S}[\Omega_{\epsilon,\phi}]^-}\,,
\end{split}
\]
where $(\theta^\#_{\epsilon,\phi,g,f},c^\#_{\epsilon,\phi,g,f})$ is the unique solution in  $C^{1,\alpha}(\partial\Omega) \times{\mathbb{R}}$  of system \eqref{biju2sharp}-\eqref{biju2sharpbis}.
\end{corollary}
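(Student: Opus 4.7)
The corollary is essentially a rescaling of the representation formula in Theorem \ref{biju}, so the plan is to transfer an $\epsilon^{n-2}$ factor from the unknowns to the integral equations via linearity. The starting point is the observation that both the periodic double layer operator $w_{q,\Omega_{\epsilon,\phi}}^{-}[\cdot]$ and the pullback-composition $\cdot \circ \phi^{(-1)}(\epsilon^{-1}(\cdot-p))$ are linear in the density, and so is the constant $c$.

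The first step is simply to substitute $\theta_{\epsilon,\phi,g,f} = \epsilon^{-(n-2)}\theta^\#_{\epsilon,\phi,g,f}$ and $c_{\epsilon,\phi,g,f} = \epsilon^{-(n-2)}c^\#_{\epsilon,\phi,g,f}$ into formula \eqref{biju1}; by linearity this yields precisely the representation stated in the corollary. The remaining point is therefore to verify that this rescaled pair is characterized by the system \eqref{biju2sharp}-\eqref{biju2sharpbis}.

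The second step is to multiply \eqref{biju2} throughout by $\epsilon^{n-2}$ and track each term. On the left, the kernel involving $DS_n$ and the constant $c$ pick up a factor $\epsilon^{n-2}$, which by linearity can be absorbed by replacing $(\theta,c)$ with $(\theta^\#,c^\#)$; the $DR_{q,n}$ term is already linear in $(\theta,c)$ and so transforms in the same way. On the right, the terms $g(t)$, $\int_Q S_{q,n}(p+\epsilon\phi(t)-y)f(y)\,dy$ and $R_{q,n}(\epsilon\phi(t))\int_Q f\,dy$ each gain a factor $\epsilon^{n-2}$, while the singular term $\epsilon^{-(n-2)}S_n(\phi(t))\int_Q f\,dy$ becomes exactly $S_n(\phi(t))\int_Q f\,dy$, matching \eqref{biju2sharp}. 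Multiplying \eqref{biju2bis} by $\epsilon^{n-2}$ obviously preserves the zero mean condition, giving \eqref{biju2sharpbis}. Uniqueness of $(\theta^\#,c^\#)$ in the new system, for $\epsilon>0$, is then inherited from the uniqueness of $(\theta,c)$ in Theorem \ref{biju} through the bijection $(\theta,c)\mapsto \epsilon^{n-2}(\theta,c)$.

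Since the argument is purely algebraic once Theorem \ref{biju} is in place, there is no real obstacle; the only care needed is the bookkeeping of the $\epsilon$-powers, in particular checking that the factor $\epsilon^{n-2}$ precisely cancels the singular $\epsilon^{-(n-2)}$ in front of $S_n(\phi(t))\int_Q f\,dy$, which is the whole motivation for introducing the sharp variables in the first place, so that the resulting system \eqref{biju2sharp}-\eqref{biju2sharpbis} extends continuously to $\epsilon = 0$.
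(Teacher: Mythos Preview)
Your proposal is correct and matches the paper's approach exactly: the paper does not even supply a separate proof for this corollary, treating it as an immediate consequence of the definition $(\theta^\#_{\epsilon,\phi,g,f},c^\#_{\epsilon,\phi,g,f}):=\epsilon^{n-2}(\theta_{\epsilon,\phi,g,f},c_{\epsilon,\phi,g,f})$ together with Theorem~\ref{biju}. Your bookkeeping of the $\epsilon$-powers and the remark on why the rescaling removes the singularity at $\epsilon=0$ are both accurate and in line with the paper's surrounding discussion.
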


For $(\epsilon,\phi,g,f)$ that tends to  $(0,\phi_0,g_0,f_0)$ system \eqref{biju2sharp}-\eqref{biju2sharpbis} turns into the following ``limiting system of integral equations,''
\begin{eqnarray}
\label{biju2sharplim}
\lefteqn{
-\frac{1}{2}\theta^\# (t) 
-   \int_{\partial\Omega}\nu_{\mathbb{I}[\phi_0]}\circ \phi_0(s)\cdot DS_{n}(  \phi_0(t)-\phi_0(s))\theta^\# (s) \tilde{\sigma}[\phi_0](s) \,d\sigma_{s} + c^\#
}
\\  \nonumber
&&\qquad\qquad\qquad\qquad\quad
=
\delta_{2,n} g_0(t)
-  \delta_{2,n} \int_{Q}S_{ q,n}( p-y) f_0(y)\,dy\\ \nonumber
&&\qquad\qquad\qquad\qquad\quad
\ +S_{n}( \phi_0(t)) \int_{Q}f_0(y)\, dy+  \delta_{2,n} R_{q,n}( 0) \int_{Q}f_0(y)\, dy
\qquad\forall t\in\partial \Omega\,,
\\
\label{biju2sharplimbis}\lefteqn{
\int_{\partial \Omega}\theta^\# \tilde{\sigma}[\phi_0]\, d\sigma=0\, .}
\end{eqnarray}

In the following Theorem \ref{thm:limiting} we prove that \eqref{biju2sharplim}-\eqref{biju2sharplimbis} has a solution and that such solution is unique. As we shall see, system \eqref{biju2sharplim}-\eqref{biju2sharplimbis} is related to a specific boundary value problem, which we call the  ``limiting boundary value problem.'' The proof of Theorem \ref{thm:limiting} follows the guidelines of the proof of  \cite[Lem.~3.4]{Mu12}.
\begin{theorem}\label{thm:limiting}
 Let $\alpha \in \mathopen]0,1[$. Let $\rho>0$.  Let $p\in Q$. Let $\Omega$ be as in \eqref{Omega_def}.  Let $(\phi_0,g_0,f_0) \in \mathcal{A}^{1,\alpha}_{\partial \Omega} \times C^{1,\alpha}(\partial \Omega) \times C^0_{q,\omega,\rho}(\mathbb{R}^n)$.  Let $\tilde{\tau}$ be the unique solution in $C^{0,\alpha}(\partial\Omega)$ of 
\begin{equation}\label{limadj}
\left\{
\begin{array}{ll}
-\frac{1}{2}\tau(t)+\int_{\partial\Omega}\nu_{\mathbb{I}[\phi_0]}\circ \phi_0(t)\cdot DS_{n}(\phi_0(t)-\phi_0(s))\tau(s)\tilde{\sigma}[\phi_0](s)\,d\sigma_{s}=0&\forall t\in\partial\Omega\,,
\\
\int_{\partial\Omega}\tau \tilde{\sigma}[\phi_0] d\sigma=1  \,.  &
\end{array}
\right. 
\end{equation}
Then the following statements hold. 
\begin{enumerate}
\item[(i)] The limiting system \eqref{biju2sharplim}-\eqref{biju2sharplimbis} has one and only one solution $(\tilde{\theta}^\#,\tilde{c}^\#)$ in $C^{1,\alpha}(\partial\Omega) \times {\mathbb{R}}$. Moreover, 
\[
\tilde{c}^\#=\int_{\partial\Omega}g^\#_n\tilde{\tau}\tilde{\sigma}[\phi_0]\,d\sigma\,,
\]
where, for all $t\in \partial\Omega$ we have
\[
g^\#_n(t):=
\left\{
\begin{array}{ll}
 g_0(t)-  \int_{Q}S_{ q,n}( p-y) f_0(y)\,dy+S_{n}( \phi_0(t)) \int_{Q}f_0(y)\,dy & \\
 \qquad +   R_{q,n}( 0) \int_{Q}f_0(y)\,dy& \text{if $n=2$}\, ,\\
S_{n}( \phi_0(t)) \int_{Q}f_0(y)\,dy& \text{if $n\geq 3$}\, .
\end{array}
\right.
\]
\item[(ii)] The limiting boundary value problem
\begin{equation}
\label{bvplim}
\left\{
\begin{array}{ll}
\Delta u (x)=0 & \forall x\in {\mathbb{R}}^{n}\setminus\overline{\mathbb{I}[\phi_0]}\,,
\\
u(x)=g^\#_n\circ \phi_0^{(-1)}( x)  &  \forall x\in \phi_0(\partial\Omega)\,,
\\
\lim_{x\to\infty}u (x)=\tilde{c}^\# \,,&
\end{array}
\right.
\end{equation}
has one and only one solution $\tilde{u}^\# $ in $ C^{1,\alpha}_{
{\mathrm{loc}} }({\mathbb{R}}^{n}\setminus \mathbb{I}[\phi_0])$. Moreover, 
\begin{equation}
\label{thm:limiting2}
\tilde{u}^\#(x)=w^-_{\mathbb{I}[\phi_0]}[\tilde{\theta}^\#\circ \phi_0^{(-1)}](x)+\tilde{c}^\#\qquad\forall x\in 
{\mathbb{R}}^{n}\setminus \mathbb{I}[\phi_0]\,.
\end{equation}
\end{enumerate}
\end{theorem}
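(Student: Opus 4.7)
The plan is to recognize that, after the change of variable $x=\phi_0(t)$, the limiting system \eqref{biju2sharplim}-\eqref{biju2sharplimbis} is exactly the integral equation formulation of the exterior Dirichlet problem \eqref{bvplim} via a double layer potential plus a constant. More precisely, setting $\mu:=\theta^\#\circ\phi_0^{(-1)}$ on $\partial\mathbb{I}[\phi_0]$, Lemma \ref{rajacon} and the jump formula for $w^-_{\mathbb{I}[\phi_0]}$ show that the left hand side of \eqref{biju2sharplim} coincides with $w^-_{\mathbb{I}[\phi_0]}[\mu](\phi_0(t))+c^\#$, while the constraint \eqref{biju2sharplimbis} pulls back to $\int_{\partial\mathbb{I}[\phi_0]}\mu\,d\sigma=0$, which is the mean-zero condition ensuring decay at infinity of the double layer also when $n=2$. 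So the proofs of (i) and (ii) become two sides of the same coin, mimicking the argument of \cite[Lem.~3.4]{Mu12}.

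For (i), I would first observe that the operator sending $(\theta^\#,c^\#)$ to the pair consisting of the left hand side of \eqref{biju2sharplim} and $\int\theta^\#\tilde{\sigma}[\phi_0]\,d\sigma$ is Fredholm of index zero from $C^{1,\alpha}(\partial\Omega)\times\mathbb{R}$ to $C^{1,\alpha}(\partial\Omega)\times\mathbb{R}$, as it is a compact perturbation of $(-\tfrac{1}{2}I,0)$ on the first component with a rank-one coupling to $\mathbb{R}$. The key point is injectivity: if the right hand side is zero, the function $u:=w^-_{\mathbb{I}[\phi_0]}[\mu]+c^\#$ is harmonic on $\mathbb{R}^n\setminus\overline{\mathbb{I}[\phi_0]}$, tends to $c^\#$ at infinity (by the mean-zero constraint on $\mu$) and vanishes on $\partial\mathbb{I}[\phi_0]$; the exterior maximum principle forces $u\equiv c^\#$, hence $w^-_{\mathbb{I}[\phi_0]}[\mu]\equiv 0$ outside and $c^\#=0$. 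Then by the jump formula, $w^+_{\mathbb{I}[\phi_0]}[\mu]$ is harmonic in $\mathbb{I}[\phi_0]$ with boundary trace $\mu$ and zero inner normal derivative (which equals the outer normal derivative of $w^-$), so it is constant, $\mu$ is constant, and the mean-zero constraint forces $\mu=0$. Fredholm index zero then yields existence.

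To extract the formula for $\tilde{c}^\#$, I would test the first equation of \eqref{biju2sharplim} against $\tilde{\tau}\,\tilde{\sigma}[\phi_0]$ and use Fubini. Exploiting $DS_n(-x)=-DS_n(x)$ to convert the kernel in \eqref{limadj} into the transposed kernel of the limiting integral operator, the pairing of the $\theta^\#$-terms collapses thanks to \eqref{limadj}, and only the contribution $c^\#\int_{\partial\Omega}\tilde{\tau}\tilde{\sigma}[\phi_0]\,d\sigma=c^\#$ survives on the left, yielding the claimed identity $\tilde{c}^\#=\int_{\partial\Omega}g^\#_n\tilde{\tau}\tilde{\sigma}[\phi_0]\,d\sigma$.

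For part (ii), the candidate $\tilde{u}^\#$ given by \eqref{thm:limiting2} is harmonic in $\mathbb{R}^n\setminus\overline{\mathbb{I}[\phi_0]}$ because so is the classical double layer, and its exterior trace is $g^\#_n\circ\phi_0^{(-1)}$ by the jump formula together with the first limiting equation. The limit at infinity equals $\tilde{c}^\#$ because the mean-zero constraint $\int_{\partial\mathbb{I}[\phi_0]}\mu\,d\sigma=0$ kills the slowest decaying term in the asymptotic expansion of $w^-_{\mathbb{I}[\phi_0]}[\mu]$ (including the logarithmic one for $n=2$). Uniqueness for \eqref{bvplim} is classical: the difference of two solutions is harmonic on $\mathbb{R}^n\setminus\overline{\mathbb{I}[\phi_0]}$, vanishes on $\partial\mathbb{I}[\phi_0]$, and has zero limit at infinity, hence vanishes identically by the maximum principle. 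The main obstacle is the injectivity step in (i), where one must carefully combine the mean-zero constraint with the behavior of $w^-_{\mathbb{I}[\phi_0]}[\mu]$ at infinity in dimension $n=2$, since otherwise logarithmic terms could conflict with the constant $c^\#$.
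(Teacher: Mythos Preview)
Your overall plan matches the paper's: after pulling back by $\phi_0$, system \eqref{biju2sharplim}--\eqref{biju2sharplimbis} becomes the standard double-layer formulation of the exterior Dirichlet problem on $\mathbb{I}[\phi_0]$, and both the paper (which simply invokes \cite[Thm.~6.37]{DaLaMu21}) and you proceed via classical potential theory. Your treatment of (ii) is fine and coincides with the paper's argument.

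There is, however, a genuine gap in your injectivity argument for (i). From $u:=w^-_{\mathbb{I}[\phi_0]}[\mu]+c^\#$ harmonic in the exterior, with $u=0$ on $\partial\mathbb{I}[\phi_0]$ and $u\to c^\#$ at infinity, the maximum principle does \emph{not} force $u\equiv c^\#$ (nor $u\equiv 0$) when $n\ge 3$: for any $c^\#\in\mathbb{R}$ the function $c^\#(1-H_0)$, with $H_0$ the capacitary potential of Remark~\ref{remarko}, satisfies exactly those conditions. The clean way to get $c^\#=0$ is precisely the pairing with $\tilde\tau$ that you use two paragraphs later to derive the formula for $\tilde c^\#$: integrate the homogeneous equation $(-\tfrac12 I+K)\theta^\#+c^\#=0$ against $\tilde\tau\,\tilde\sigma[\phi_0]$ and use \eqref{limadj} together with $DS_n(-z)=-DS_n(z)$; the $\theta^\#$-terms cancel and you are left with $c^\#\int_{\partial\Omega}\tilde\tau\,\tilde\sigma[\phi_0]\,d\sigma=c^\#=0$. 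Only then does your interior Neumann argument show $\theta^\#$ is constant, and the mean-zero constraint pins it to $0$. Two related misconceptions should also be corrected: the double layer $w^-_{\mathbb{I}[\phi_0]}[\mu]$ tends to $0$ at infinity for \emph{every} density $\mu$ (it decays like $|x|^{1-n}$), so the mean-zero constraint is irrelevant for the limit at infinity; and there are no logarithmic terms in the double layer in dimension $2$---that is a feature of the single layer. The sole role of the mean-zero constraint here is to kill the one-dimensional kernel (the constants) of $-\tfrac12 I+K$.
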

\begin{proof}
By classical potential theory (cf., e.g., Folland~\cite[Ch.~3 ]{Fo95}, \cite[Lem.~6.34]{DaLaMu21}) and by the theorem of change of variable in integrals, we can see that problem \eqref{limadj} has a unique solution $\tilde{\tau}\in C^{0,\alpha}(\partial\Omega)$. Then the validity of statement (i) follows by \cite[Thm.~6.37 (i)]{DaLaMu21} and, once more, by the theorem of change of variable in integrals.\par

To prove statement (ii) we first observe that problem 
 \eqref{bvplim}  has at most one continuous solution (this follows by a classical argument based on the Maximum Principle).  Then, by the properties of the classical double layer potentials and exploiting the fact that $(\tilde{\theta}^\#,\tilde{c}^\#)$ is a solution of \eqref{biju2sharplim}-\eqref{biju2sharplimbis}, we can see that the function $\tilde{u}^\#$  in \eqref{thm:limiting2} is harmonic and satisfies   the second and the third conditions  in 
\eqref{bvplim} (cf.~\cite[\S~4.5]{DaLaMu21}). Thus it coincides with the unique solution of  \eqref{bvplim}. 
 \end{proof}

We might be curious to know what is the constant $\tilde c^\#$ that appears in Theorem \ref{thm:limiting}. In the following remark we attempt an explanation.  

\begin{remark}\label{remarko}
By the computations of \cite[Lem.~7.2]{DaMuRo15}, we can verify that  if $n=2$, then $\tilde{u}^\#$ coincides with the unique solution of
\[
\left\{
\begin{array}{ll}
\Delta u (x)=0 & \forall x\in {\mathbb{R}}^{2}\setminus\overline{\mathbb{I}[\phi_0]}\,,
\\
u(x)=g^\#_n\circ \phi_0^{(-1)}( x)  &  \forall x\in \phi_0(\partial\Omega)\,,
\\
\sup_{x\in \mathbb{R}^2\setminus \mathbb{I}[\phi_0]}|u (x)|<+\infty\,,&
\end{array}
\right.
\]
and that 
\[
\tilde{c}^\#=\lim_{x\to \infty}\tilde{u}^\#(x)\, .
\]
If, instead,  $n\geq 3$, then an extension of the $2$-dimensional argument of \cite[Lem.~7.2]{DaMuRo15} to the $(n\ge 3)$-dimensional case shows that
\[
\tilde{c}^\#=\frac{1}{(2-n)s_n}\int_{Q}f_0(y)\, dy \, \Big(\lim_{x\to \infty}|x|^{n-2}H_0(x)\Big)^{-1}\, ,
\]
where $H_0$ is the unique function of $C^{1,\alpha}_\mathrm{loc}(\mathbb{R}^n\setminus \mathbb{I}[\phi_0])$ such that 
\[
\label{bvplimvarbis}
\left\{
\begin{array}{ll}
\Delta H_0 (x)=0 & \forall x\in {\mathbb{R}}^{n}\setminus\overline{\mathbb{I}[\phi_0]}\,,
\\
H_0(x)=1  &  \forall x\in \phi_0(\partial\Omega)\,,
\\
\lim_{x\to \infty}H_0(x)=0&
\end{array}
\right.
\]
and where we can see that the limit 
\[
\lim_{x\to \infty}|x|^{n-2}H_0(x)
\]
exists and belongs to $\mathopen]0,+\infty[$ (see, e.g., Folland \cite[Chap.~2, Prop.~2.74]{Fo95}). In particular,  we have
\[
\text{$\tilde{c}^\#\neq 0$ as soon as $\int_{Q}f_0(y)dy\neq 0$.}
\] 
\end{remark}

In the following Theorem \ref{thm:Lmbdcase1} we consider the map that takes $(\epsilon,\phi,g,f)$ to $(\theta^\#_{\epsilon,\phi,g,f},c^\#_{\epsilon,\phi,g,f})$ and prove that it has a real analytic continuation in a neighborhood of $(0,\phi_0,g_0,f_0)$. The proof of Theorem \ref{thm:Lmbdcase1} exploits the Implicit Function Theorem for real analytic maps in Banach spaces (cf., e.g.,  Deimling \cite[Thm.~15.3]{De85}).

\begin{theorem}\label{thm:Lmbdcase1}
 Let $\alpha \in \mathopen]0,1[$. Let $\rho>0$.  Let $p\in Q$. Let $\Omega$ be as in \eqref{Omega_def}.  Let $(\phi_0,g_0,f_0) \in \mathcal{A}^{1,\alpha}_{\partial \Omega} \times C^{1,\alpha}(\partial \Omega) \times C^0_{q,\omega,\rho}(\mathbb{R}^n)$. Let $\epsilon_{0}$, $\mathcal{O}_{\phi_0}$ be as in \eqref{e0}.   Then there exist $\epsilon_{\#,1} \in \mathopen ]0,\epsilon_0[$, an open neighborhood $\mathcal{O}_{\phi_0}'$ of $\phi_0$ in $\mathcal{A}^{1,\alpha}_{\partial \Omega}$,  an open neighborhood $\mathcal{U}_0$ of $(g_0,f_0)$ in $C^{1,\alpha}(\partial \Omega) \times C^0_{q,\omega,\rho}(\mathbb{R}^n)$, and a real analytic map 
 \[
 (\Theta_\#,C_\#):\mathopen]-\epsilon_{\#,1},\epsilon_{\#,1}\mathclose[ \times \mathcal{O}_{\phi_0}' \times \mathcal{U}_0\to C^{1,\alpha}(\partial \Omega)\times \mathbb{R}
 \] 
 such that
\begin{align}
&(\Theta_\#[\epsilon,\phi,g,f],C_\#[\epsilon,\phi,g,f])=(\theta^\#_{\epsilon,\phi,g,f},c^\#_{\epsilon,\phi,g,f}) \qquad \forall (\epsilon,\phi,g,f)  \in  \mathopen]0,\epsilon_{\#,1}\mathclose[ \times \mathcal{O}_{\phi_0}' \times \mathcal{U}_0\, , \nonumber
\\ 
&(\Theta_\#[0,\phi_0,g_0,f_0],C_\#[0,\phi_0,g_0,f_0])=(\tilde{\theta}^\#,\tilde{c}^\#)\, .\nonumber
\end{align}
\end{theorem}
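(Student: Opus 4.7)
The plan is to apply the real analytic Implicit Function Theorem in Banach spaces to the functional counterpart of the rescaled system \eqref{biju2sharp}--\eqref{biju2sharpbis}. Specifically, I would introduce the map
\[
M:\mathopen]-\epsilon_0,\epsilon_0\mathclose[\times\mathcal{O}_{\phi_0}\times C^{1,\alpha}(\partial\Omega)\times C^0_{q,\omega,\rho}(\mathbb{R}^n)\times C^{1,\alpha}(\partial\Omega)\times\mathbb{R}\to C^{1,\alpha}(\partial\Omega)\times\mathbb{R}
\]
whose first component is the difference of the two sides of \eqref{biju2sharp} and whose second component is $\int_{\partial\Omega}\theta^\#\tilde\sigma[\phi]\,d\sigma$. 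Then $(\theta^\#,c^\#)$ satisfies the rescaled system if and only if $M[\epsilon,\phi,g,f,\theta^\#,c^\#]=0$, and Theorem~\ref{thm:limiting}(i) yields $M[0,\phi_0,g_0,f_0,\tilde\theta^\#,\tilde c^\#]=0$.

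The next step is to show that $M$ is real analytic on its domain. The term with $DS_n(\phi(t)-\phi(s))$ is an integral operator whose kernel is a real analytic function of $(\phi(t),\phi(s))$ away from the diagonal, with the expected pseudo-homogeneous structure; real analyticity in $\phi$ and linearity in $\theta^\#$ follow by now standard results (see, e.g., Lanza de Cristoforis and Rossi \cite{LaRo04}). The term involving $R_{q,n}$ is regular because $R_{q,n}$ extends analytically through the origin, so the factor $\epsilon^{n-1} DR_{q,n}(\epsilon(\phi(t)-\phi(s)))$ is jointly real analytic in $(\epsilon,\phi)$ on a full neighborhood of $\{0\}\times\mathcal{O}_{\phi_0}$. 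The map $\phi\mapsto\tilde\sigma[\phi]$ and $\phi\mapsto\nu_{\mathbb{I}[\phi]}\circ\phi$ are real analytic by Lemma~\ref{rajacon}. Finally, the Poisson contribution $\epsilon^{n-2}\int_Q S_{q,n}(p+\epsilon\phi(t)-y)f(y)\,dy$ equals $\epsilon^{n-2}P_q[f](p+\epsilon\phi(t))$, which is analytic in $(\epsilon,\phi,f)$ by combining Theorem~\ref{thm:newperpot}(ii) with Proposition~\ref{prop:Pr}; analogously one treats the explicit terms involving $S_n(\phi(t))$ and $R_{q,n}(\epsilon\phi(t))$.

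The key step, and the one I expect to require the most care, is to show that the partial differential
\[
D_{(\theta^\#,c^\#)}M[0,\phi_0,g_0,f_0,\tilde\theta^\#,\tilde c^\#]:C^{1,\alpha}(\partial\Omega)\times\mathbb{R}\to C^{1,\alpha}(\partial\Omega)\times\mathbb{R}
\]
is a linear homeomorphism. Because $M$ is affine in $(\theta^\#,c^\#)$, this differential is the map sending $(\vartheta,\gamma)$ to
\[
\Bigl(-\tfrac{1}{2}\vartheta(t)-\int_{\partial\Omega}\nu_{\mathbb{I}[\phi_0]}\circ\phi_0(s)\cdot DS_n(\phi_0(t)-\phi_0(s))\vartheta(s)\tilde\sigma[\phi_0](s)\,d\sigma_s+\gamma,\ \int_{\partial\Omega}\vartheta\tilde\sigma[\phi_0]\,d\sigma\Bigr).
\]
Performing the change of variable $y=\phi_0(s)$ using Lemma~\ref{rajacon}(i) and setting $\eta=\vartheta\circ\phi_0^{(-1)}$, this pulls back to the integral operator on $\phi_0(\partial\Omega)$ associated with the exterior Dirichlet problem on $\mathbb{R}^n\setminus\overline{\mathbb{I}[\phi_0]}$ with prescribed constant limit at infinity. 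Its bijectivity on $C^{1,\alpha}(\phi_0(\partial\Omega))\times\mathbb{R}$ is precisely the classical result invoked in the proof of Theorem~\ref{thm:limiting} (see \cite[Thm.~6.37]{DaLaMu21}), and by the open mapping theorem it is a topological isomorphism.

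With all the hypotheses verified, the real analytic Implicit Function Theorem (cf.\ Deimling \cite[Thm.~15.3]{De85}) yields $\epsilon_{\#,1}\in\mathopen]0,\epsilon_0[$, a neighborhood $\mathcal{O}_{\phi_0}'\subseteq\mathcal{O}_{\phi_0}$ of $\phi_0$, a neighborhood $\mathcal{U}_0$ of $(g_0,f_0)$, and a real analytic map $(\Theta_\#,C_\#)$ on their product, with $(\Theta_\#[0,\phi_0,g_0,f_0],C_\#[0,\phi_0,g_0,f_0])=(\tilde\theta^\#,\tilde c^\#)$, which zeroes out $M$. The uniqueness clause in Corollary~\ref{bijusharp} identifies $(\Theta_\#,C_\#)$ with $(\theta^\#_{\epsilon,\phi,g,f},c^\#_{\epsilon,\phi,g,f})$ for $\epsilon\in\mathopen]0,\epsilon_{\#,1}\mathclose[$, completing the proof.
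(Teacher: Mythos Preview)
Your proposal is correct and follows essentially the same approach as the paper: define the functional counterpart $M$ (the paper calls it $\Lambda_\#$) of system \eqref{biju2sharp}--\eqref{biju2sharpbis}, verify its real analyticity via the same toolkit (Lemma~\ref{rajacon}, Theorem~\ref{thm:newperpot}, Proposition~\ref{prop:Pr}, and the Lanza de Cristoforis--Rossi results), check that the partial differential at $(0,\phi_0,g_0,f_0,\tilde\theta^\#,\tilde c^\#)$ is the classical exterior-Dirichlet operator and hence an isomorphism, and conclude by the real analytic Implicit Function Theorem. The only cosmetic difference is in the precise references you cite for the analyticity of the double layer term and for the bijectivity of the partial differential, but the substance is identical.
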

\begin{proof}
 Let $\Lambda_{\#}:= (\Lambda_{\#,1}, \Lambda_{\#,2})$ be the map 
from $ \mathopen]-\epsilon_{0},\epsilon_{0}\mathclose[\times \mathcal{O}_{\phi_0} \times C^{1,\alpha}(\partial \Omega) \times C^0_{q,\omega,\rho}(\mathbb{R}^n)\times C^{1,\alpha}(\partial\Omega) \times {\mathbb{R}}$ to $C^{1,\alpha}(\partial\Omega)\times \mathbb{R}$ defined by
\begin{eqnarray}
\nonumber 
\lefteqn{
\Lambda_{\#,1}[\epsilon,\phi,g,f,\theta^\#,c^\#](t):=-\frac{1}{2}\theta^\# (t) 
-   \int_{\partial\Omega}\nu_{\mathbb{I}[\phi]}\circ \phi(s)\cdot DS_{n}(  \phi(t)-\phi(s))\theta^\# (s) \tilde{\sigma}[\phi](s) \,d\sigma_{s}
}
\\ \nonumber
&&\qquad\qquad\quad\quad
-\epsilon^{n-1}\int_{\partial\Omega}\nu_{\mathbb{I}[\phi]}\circ \phi(s)\cdot DR_{q,n}(\epsilon (\phi(t)-\phi(s)))\theta^\# (s) \tilde{\sigma}[\phi](s) \,d\sigma_{s} +c^\#
\\ \nonumber
&&\qquad\qquad\quad
-\epsilon^{n-2} g(t)
+  \epsilon^{n-2} \int_{Q}S_{ q,n}( p+\epsilon \phi(t)-y) f(y)\,dy\\ \nonumber
&&\qquad\qquad\quad
\ -S_{n}( \phi(t)) \int_{Q}f(y)\, dy-  \epsilon^{n-2} R_{q,n}( \epsilon \phi(t)) \int_{Q}f(y)\, dy
\qquad\forall t\in\partial \Omega\,,\\
\nonumber 
\lefteqn{
\Lambda_{\#,2}[\epsilon,\phi,g,f,\theta^\#,c^\#]:=\int_{\partial \Omega}\theta^\# \tilde{\sigma}[\phi]\, d\sigma\, ,}
\end{eqnarray}
for all  $(\epsilon,\phi,g,f,\theta^\#,c^\#)\in  \mathopen]-\epsilon_{0},\epsilon_{0}\mathclose[\times \mathcal{O}_{\phi_0} \times C^{1,\alpha}(\partial \Omega) \times C^0_{q,\omega,\rho}(\mathbb{R}^n)\times C^{1,\alpha}(\partial\Omega) \times {\mathbb{R}}$.

 We first note that equation $\Lambda_\#[0,\phi_0,g_0,f_0,\theta^\#,c^\#]=0$  in the unknown $(\theta^\#,c^\#)\in C^{1,\alpha}(\partial\Omega) \times {\mathbb{R}}$ is equivalent  to the limiting  system \eqref{biju2sharplim}-\eqref{biju2sharplimbis}, which has one and only one solution $(\tilde{\theta}^\#,\tilde{c}^\#)$ in $C^{1,\alpha}(\partial\Omega) \times {\mathbb{R}}$. Similarly, if $(\epsilon,\phi,g,f)\in   \mathopen]0,\epsilon_{0}\mathclose[\times \mathcal{O}_{\phi_0} \times C^{1,\alpha}(\partial \Omega) \times C^0_{q,\omega,\rho}(\mathbb{R}^n)$, then equation $\Lambda_\#[\epsilon,\phi,g,f,\theta^\#,c^\#]=0$ in the unknown $(\theta^\#,c^\#)\in C^{1,\alpha}(\partial\Omega) \times {\mathbb{R}}$ is equivalent  to the system \eqref{biju2sharp}-\eqref{biju2sharpbis} and has one and only one solution $(\theta^\#_{\epsilon,\phi,g,f},c^\#_{\epsilon,\phi,g,f})\in C^{1,\alpha}(\partial\Omega) \times {\mathbb{R}}$.
 
  Then we observe that $\Lambda_\#$ is real analytic in a neighborhood of $(0, \phi_0,g_0,f_0,\tilde{\theta}^\#,\tilde{c}^\#)$. Namely, it is real analytic from $\mathopen]-\epsilon_{0},\epsilon_{0}\mathclose[\times \mathcal{O}_{\phi_0} \times C^{1,\alpha}(\partial \Omega) \times C^0_{q,\omega,\rho}(\mathbb{R}^n)\times C^{1,\alpha}(\partial\Omega) \times {\mathbb{R}}$ to $C^{1,\alpha}(\partial\Omega)\times \mathbb{R}$. This follows by the real analyticity results for the double layer potential of Lanza de Cristoforis and Rossi \cite[Thm.~4.11 (iii)]{LaRo08}, by real analyticity results for integral operators with real analytic kernel (cf.~\cite{LaMu13}), by the regularity result for volume potentials of Theorem \ref{thm:newperpot}, by the analyticity results for the composition operator of  Valent~\cite[Thm.~5.2, p.~44]{Va88}, by Proposition \ref{prop:Pr}, by Lemma \ref{rajacon},  and by standard calculus in Banach spaces.

Since we plan to use the Implicit Function Theorem, we now consider the partial differential $\partial_{(\theta^\#,c^\#)}\Lambda_\#[0,\phi_0,g_0,f_0, \tilde{\theta}^\#,\tilde{c}^\#]$ of $\Lambda_\#$ at $(0,\phi_0,g_0,f_0, \tilde{\theta}^\#,\tilde{c}^\#)$ with respect to the variable $(\theta^\#,c^\#)$.  By standard calculus in Banach spaces we have
\begin{align*}
&\partial_{(\theta^\#,c^\#)}\Lambda_{\#,1}[0,\phi_0,g_0,f_0,\tilde{\theta}^\#,\tilde{c}^\#](\overline{\theta},\overline{c})(t)\\&=-\frac{1}{2}\overline{\theta} (t) 
-   \int_{\partial\Omega}\nu_{\mathbb{I}[\phi_0]}\circ \phi_0(s)\cdot DS_{n}(  \phi_0(t)-\phi_0(s))\overline{\theta} (s)\tilde{\sigma}[\phi_0](s)  \,d\sigma_{s}+\overline{c}
\qquad\forall t\in\partial \Omega\,,\\
& \partial_{(\theta^\#,c^\#)}\Lambda_{\#,2}[0,\phi_0,g_0,f_0,\tilde{\theta}^\#,\tilde{c}^\#](\overline{\theta},\overline{c})=\int_{\partial \Omega}\overline{\theta} \tilde{\sigma}[\phi_0]\, d\sigma\, ,
\end{align*}
for all $(\overline{\theta},\overline{c})\in C^{1,\alpha}(\partial\Omega) \times {\mathbb{R}}$. By arguing as in the proofs of Theorem \ref{thm:limiting} (i) and of \cite[Prop.~13.10]{DaLaMu21}, we can see that $\partial_{(\theta^\#,c^\#)}\Lambda_\#[0,\phi_0,g_0,f_0, \tilde{\theta}^\#,\tilde{c}^\#]$ is a bijection.
Then by the Open Mapping Theorem, the operator  $\partial_{(\theta^\#,c^\#)}\Lambda_\#[0,\phi_0,g_0,f_0, \tilde{\theta}^\#,\tilde{c}^\#]$ is also a homeomorphism from $C^{1,\alpha}(\partial\Omega) \times {\mathbb{R}}$ to itself.

We can invoke the Implicit Function Theorem for real analytic maps in Banach spaces (cf., e.g.,  Deimling \cite[Thm.~15.3]{De85}) and deduce  the existence of $\epsilon_{\#,1}$, $\mathcal{O}_{\phi_0}'$,  $\mathcal{U}_0$, and $(\Theta_\#,C_\#)$ as in the statement. 
\end{proof}

\section{A functional analytic representation theorem for the solution}\label{s:rep}

We are ready to prove our main Theorem \ref{thm:repsol}. As mentioned in the introduction, we will write the map $(\epsilon,\phi,g,f)\mapsto u[\epsilon,\phi,g,f]_{\overline{V}}$ as a combination of real analytic maps of $(\epsilon,\phi,g,f)$ and--possibly singular but completely known--elementary functions of $\epsilon$. In particular, we will focus on the case where $(\epsilon,\phi,g,f)$ is close to a quadruple $(0,\phi_0,g_0,f_0)$ with the size parameter $\epsilon$ equal to $0$, which is interesting also because of the singular behavior that appears when $\int_Q f_0\,dx\neq 0$. Theorem \ref{thm:repsol} is a consequence of Theorem \ref{thm:Lmbdcase1} on the analytic continuation of   $(\epsilon,\phi,g,f)\mapsto(\theta^\#_{\epsilon,\phi,g,f},c^\#_{\epsilon,\phi,g,f})$ and of the representation formula  for $u[\epsilon,\phi,g,f]$ of Corollary \ref{bijusharp}.

\begin{theorem}\label{thm:repsol}
 Let $\alpha \in \mathopen]0,1[$. Let $\rho>0$.  Let $p\in Q$. Let $\Omega$ be as in \eqref{Omega_def}.  Let $(\phi_0,g_0,f_0) \in \mathcal{A}^{1,\alpha}_{\partial \Omega} \times C^{1,\alpha}(\partial \Omega) \times C^0_{q,\omega,\rho}(\mathbb{R}^n)$. Let $\epsilon_{\#,1}$, $\mathcal{O}_{\phi_0}'$,  $\mathcal{U}_0$ be  as in Theorem \ref{thm:Lmbdcase1}. Let $V$ be a bounded open subset of $\mathbb{R}^n \setminus (p+q\mathbb{Z}^n)$. Then there exist $\epsilon_{\#,2} \in \mathopen ]0,\epsilon_{\#,1}[$, an open neighborhood $\mathcal{O}_{\phi_0}''$ of $\phi_0$ in $\mathcal{A}^{1,\alpha}_{\partial \Omega}$ contained in $\mathcal{O}_{\phi_0}'$,  and a real analytic map $\mathfrak{U}$ from $\mathopen]-\epsilon_{\#,2},\epsilon_{\#,2}\mathclose[ \times \mathcal{O}_{\phi_0}'' \times \mathcal{U}_0$ to $C^{2}(\overline{V})$ such that
 \begin{equation}\label{eq:repsol:1}
 \overline{V}\subseteq \mathbb{S}[\Omega_{\epsilon,\phi}]^- \qquad \forall (\epsilon,\phi)\in \mathopen]-\epsilon_{\#,2},\epsilon_{\#,2}\mathclose[ \times \mathcal{O}_{\phi_0}'' 
 \end{equation}
 and 
 \begin{equation}\label{eq:repsol:2}
 \begin{split}
u[\epsilon,\phi,g,f]_{|\overline{V}}=\frac{1}{\epsilon^{n-2}}\mathfrak{U}[\epsilon,\phi,g,f]&+\delta_{2,n}  \frac{  \log \epsilon}{2 \pi} \int_{Q}f(y)\, dy\\
 &  \forall (\epsilon,\phi,g,f)\in \mathopen]0,\epsilon_{\#,2}\mathclose[ \times \mathcal{O}_{\phi_0}'' \times \mathcal{U}_0\, .
\end{split}
\end{equation}
Moreover,
 \begin{equation}\label{eq:repsol:3}
\mathfrak{U}[0,\phi_0,g_0,f_0](x)=\tilde{c}^\#+\delta_{2,n} P_{q}[f_0]( x )-  \delta_{2,n} S_{q,n}(x-p)\int_{Q}f_0(y)\,dy \qquad \forall x \in \overline{V}\, .
\end{equation}
\end{theorem}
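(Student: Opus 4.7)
The plan is to combine the representation formula from Corollary \ref{bijusharp} with the real analytic extension $(\Theta_\#,C_\#)$ provided by Theorem \ref{thm:Lmbdcase1}. Specifically, I would define
\begin{equation*}
\mathfrak{U}[\epsilon,\phi,g,f](x) := W[\epsilon,\phi,g,f](x) + C_\#[\epsilon,\phi,g,f] + \epsilon^{n-2}\Bigl(P_q[f](x) - S_{q,n}(x-p)\int_Q f(y)\,dy\Bigr)
\end{equation*}
for $x\in\overline{V}$, where
\begin{equation*}
W[\epsilon,\phi,g,f](x) := w_{q, \Omega_{\epsilon,\phi}}^{-}\bigl[\Theta_\#[\epsilon,\phi,g,f]\circ \phi^{(-1)}(\epsilon^{-1}(\cdot - p))\bigr](x)\,.
\end{equation*}
With this definition, dividing $\mathfrak{U}$ by $\epsilon^{n-2}$ and invoking the coincidence $(\Theta_\#,C_\#)=(\theta^\#_{\epsilon,\phi,g,f},c^\#_{\epsilon,\phi,g,f})$ for $\epsilon>0$ recovers exactly the formula in Corollary \ref{bijusharp}, which settles \eqref{eq:repsol:2}.

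Next, I would choose $\epsilon_{\#,2}\in\mathopen]0,\epsilon_{\#,1}[$ and a neighborhood $\mathcal{O}_{\phi_0}''\subseteq\mathcal{O}_{\phi_0}'$ of $\phi_0$ so small that $\overline{V}\subseteq\mathbb{S}[\Omega_{\epsilon,\phi}]^-$ and, simultaneously, $x-p-\epsilon\phi(s)\notin q\mathbb{Z}^n$ for all $x\in\overline{V}$, $s\in\partial\Omega$, and $(\epsilon,\phi)\in\mathopen]-\epsilon_{\#,2},\epsilon_{\#,2}\mathclose[\times\mathcal{O}_{\phi_0}''$. This is possible because $\overline{V}$ is compact and (by the compact containment of $V$ in $\mathbb{R}^n\setminus(p+q\mathbb{Z}^n)$ recalled in the Introduction) has positive distance from $p+q\mathbb{Z}^n$, while $(\epsilon,\phi)\mapsto\epsilon\phi(s)$ is continuous from $\mathopen]-\epsilon_0,\epsilon_0\mathclose[\times\mathcal{A}^{1,\alpha}_{\partial\Omega}$ to $C^0(\partial\Omega,\mathbb{R}^n)$. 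This gives \eqref{eq:repsol:1}.

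The core of the proof is the real analyticity of $\mathfrak{U}$ from $\mathopen]-\epsilon_{\#,2},\epsilon_{\#,2}\mathclose[\times\mathcal{O}_{\phi_0}''\times\mathcal{U}_0$ to $C^2(\overline{V})$. The summand $C_\#[\epsilon,\phi,g,f]$ is scalar and real analytic by Theorem \ref{thm:Lmbdcase1}. In the third summand, $\epsilon^{n-2}$ is polynomial in $\epsilon$, the volume potential $f\mapsto P_q[f]$ is linear continuous from $C^0_{q,\omega,\rho}(\mathbb{R}^n)$ to $C^0_{q,\omega,\rho'}(\mathbb{R}^n)$ by Theorem \ref{thm:newperpot}(ii) (and hence, after restriction, into $C^2(\overline{V})$), while $S_{q,n}(\cdot-p)_{|\overline{V}}\in C^2(\overline{V})$ is a fixed datum and $f\mapsto\int_Q f\,dy$ is linear and continuous; hence the third summand is real analytic. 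The main obstacle is the double layer term $W$. Performing the standard change of variable $y=p+\epsilon\phi(s)$ on $\partial\Omega_{\epsilon,\phi}$, using Lemma \ref{rajacon}(i) for the surface measure and the identity $\nu_{\Omega_{\epsilon,\phi}}(p+\epsilon\phi(s))=\nu_{\mathbb{I}[\phi]}\circ\phi(s)$ (Lemma \ref{rajacon}(ii)), one rewrites
\begin{equation*}
W[\epsilon,\phi,g,f](x) = -\epsilon^{n-1}\int_{\partial\Omega}\nu_{\mathbb{I}[\phi]}\circ\phi(s)\cdot DS_{q,n}(x-p-\epsilon\phi(s))\,\Theta_\#[\epsilon,\phi,g,f](s)\,\tilde\sigma[\phi](s)\,d\sigma_s\,.
\end{equation*}
By the choice of $\epsilon_{\#,2}$ and $\mathcal{O}_{\phi_0}''$, the argument $x-p-\epsilon\phi(s)$ stays in $\mathbb{R}^n\setminus q\mathbb{Z}^n$ for $(x,s,\epsilon,\phi)$ in the relevant range, hence the kernel is real analytic in all its arguments. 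Then, by real analyticity results for integral operators with real analytic kernel (see \cite{LaMu13}), together with Lemma \ref{rajacon} and Theorem \ref{thm:Lmbdcase1}, the map $(\epsilon,\phi,g,f)\mapsto W[\epsilon,\phi,g,f]$ is real analytic into $C^2(\overline{V})$ (in fact into $C^k(\overline{V})$ for any $k\in\mathbb{N}$).

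Finally, I would evaluate $\mathfrak{U}$ at $(0,\phi_0,g_0,f_0)$ to verify \eqref{eq:repsol:3}. The $\epsilon^{n-1}$ prefactor kills $W$ at $\epsilon=0$ since $n\ge 2$; the middle summand becomes $C_\#[0,\phi_0,g_0,f_0]=\tilde c^\#$ by Theorem \ref{thm:Lmbdcase1}; and $\epsilon^{n-2}_{|\epsilon=0}=\delta_{2,n}$, so the third summand reduces to $\delta_{2,n}\bigl(P_q[f_0](x)-S_{q,n}(x-p)\int_Q f_0(y)\,dy\bigr)$, matching exactly \eqref{eq:repsol:3}. The only genuinely delicate step in this outline is the analyticity of the pulled-back integral $W$, which rests on the machinery of integral operators with analytic kernels, but once the kernel is verified to be analytic on its domain, the argument is standard.
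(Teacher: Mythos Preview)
Your proposal is correct and follows essentially the same route as the paper: you define $\mathfrak{U}$ as the sum of the pulled-back double layer term, the constant $C_\#$, and $\epsilon^{n-2}$ times the corrected Newtonian potential, then invoke Corollary~\ref{bijusharp} for \eqref{eq:repsol:2}, the real analyticity results of \cite{LaMu13} together with Lemma~\ref{rajacon} and Theorem~\ref{thm:newperpot} for the analyticity of $\mathfrak{U}$, and evaluate at $\epsilon=0$ using $C_\#[0,\phi_0,g_0,f_0]=\tilde c^\#$ for \eqref{eq:repsol:3}. One cosmetic remark: your initial definition of $W$ via $w_{q,\Omega_{\epsilon,\phi}}^-$ only makes sense for $\epsilon>0$, so strictly speaking $\mathfrak{U}$ should be defined directly through the pulled-back integral $-\epsilon^{n-1}\int_{\partial\Omega}\cdots$ (as the paper does), which is what you arrive at anyway after the change of variables; also, the normal identity $\nu_{\Omega_{\epsilon,\phi}}(p+\epsilon\phi(s))=\nu_{\mathbb{I}[\phi]}\circ\phi(s)$ is not the content of Lemma~\ref{rajacon}(ii) but a separate elementary fact (cf.\ \cite[Lem.~3.1]{La08}).
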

\begin{proof} Clearly, \eqref{eq:repsol:1} holds true for  $\epsilon_{\#,2} $ and $\mathcal{O}_{\phi_0}''$  small enough. Then we note that
\[
\begin{split}
u[\epsilon,\phi,g,f](x)=\,\, &-  \epsilon \int_{\partial\Omega}\nu_{\mathbb{I}[\phi]}\circ \phi(s)\cdot DS_{q,n}(  x-p-\epsilon \phi(s))\Theta_\#[\epsilon,\phi,g,f] (s) \tilde{\sigma}[\phi](s) \,d\sigma_{s}\\
&+\frac{1}{\epsilon^{n-2}}C_\#[\epsilon,\phi,g,f]+ P_{q}[f]( x )-  S_{q,n}(x-p)\int_{Q}f(y)\, dy\\&+\delta_{2,n} \frac{  \log \epsilon}{2 \pi} \int_{Q}f(y)\, dy
\end{split}
\]
for all $x\in \overline{V}$ and all $(\epsilon,\phi,g,f)\in \mathopen]0,\epsilon_{\#,2}\mathclose[ \times \mathcal{O}_{\phi_0}'' \times \mathcal{U}_0$ and we set
\[
\begin{split}
\mathfrak{U}&[\epsilon,\phi,g,f](x):=- \epsilon^{n-1} \int_{\partial\Omega}\nu_{\mathbb{I}[\phi]}\circ \phi(s)\cdot DS_{q,n}(  x-p-\epsilon \phi(s))\Theta_\#[\epsilon,\phi,g,f] (s) \tilde{\sigma}[\phi](s) \,d\sigma_{s}\\
&+C_\#[\epsilon,\phi,g,f]+ \epsilon^{n-2} P_{q}[f]( x )-  \epsilon^{n-2} S_{q,n}(x-p)\int_{Q}f(y)\, dy
\end{split}
\]
for all $x\in \overline{V}$ and all  $(\epsilon,\phi,g,f)\in \mathopen]-\epsilon_{\#,2},\epsilon_{\#,2}\mathclose[ \times \mathcal{O}_{\phi_0}'' \times \mathcal{U}_0$. By  Theorem \ref{thm:newperpot}, by Lemma \ref{rajacon}, by real analyticity results for integral operators with real analytic kernel (cf.~\cite{LaMu13}), and by standard calculus in Banach spaces, we deduce that $\mathfrak{U}$ is a real analytic map  from $\mathopen]-\epsilon_{\#,2},\epsilon_{\#,2}\mathclose[ \times \mathcal{O}_{\phi_0}'' \times \mathcal{U}_0$ to $C^{2}(\overline{V})$ such that equality \eqref{eq:repsol:2} holds. Since $C_\#[0,\phi_0,g_0,f_0]=\tilde{c}^\#$, we also deduce the validity of \eqref{eq:repsol:3}.
\end{proof}

If we  fix $(\phi,g,f) =(\phi_0,g_0,f_0) \in \mathcal{A}^{1,\alpha}_{\partial \Omega} \times C^{1,\alpha}(\partial \Omega) \times C^0_{q,\omega,\rho}(\mathbb{R}^n)$ with 
\[
\int_{Q}f_0\, dx\neq 0\, ,
\]
then formula \eqref{eq:repsol:2} shows that for $n=2$  the function $u[\epsilon,\phi_0,g_0,f_0]$ displays a singular behavior of order $\log\epsilon$ as $\epsilon$ tends to $0$. Under the same assumptions on the triple 
$(\phi,g,f)$, we can see that, for $n\ge 3$,  $u[\epsilon,\phi_0,g_0,f_0]$ has a singularity of order $\epsilon^{2-n}$   as $\epsilon$ tends to $0$. This can be deduced from  \eqref{eq:repsol:2} and \eqref{eq:repsol:3} and remembering that $\tilde c^\#\neq 0$ (cf.~Remark \ref{remarko}).

Also, the fact that $\mathfrak{U}$ is real analytic means that we can expand $\mathfrak{U}[\epsilon,\phi,g,f]$ into a power series  that converges (in norm) for $(\epsilon,\phi,g,f)$ in a neighborhood of $(0,\phi_0,g_0,f_0)$. The approach presented in this paper can be used to compute the corresponding coefficients (see, e.g., \cite{DaLaMu21, DaMuPu19, DaMuRo15}).

\subsection*{Acknowledgment}

The authors are members of the ``Gruppo Nazionale per l'Analisi Matematica, la Probabilit\`a e le loro Applicazioni'' (GNAMPA) of the ``Istituto Nazionale di Alta Matematica'' (INdAM). 
P.M.~acknowledges the support from EU through the H2020-MSCA-RISE-2020 project EffectFact, 
Grant agreement ID: 101008140.

\vspace{5mm}

\end{document}